\numberwithin{equation}{section}
\newtheorem{prop}{Proposition}
\newtheorem{lemma}[prop]{Lemma}
\newtheorem{thm}[prop]{Theorem}
\newtheorem{cor}[prop]{Corollary}
\numberwithin{prop}{section}
\theoremstyle{definition}
\newtheorem{defn}[prop]{Definition}
\newtheorem{rmk}[prop]{Remark}
\newcommand{\del}{\partial}
\newcommand{\delb}{\bar{\partial}}\newcommand{\dt}{\frac{\partial}{\partial t}}
\newcommand{\brs}[1]{\left| #1 \right|}
\renewcommand{\gg}{\gamma}
\newcommand{\gD}{\Delta}
\newcommand{\gl}{\lambda}
\newcommand{\gw}{\omega}
\newcommand{\ga}{\alpha}
\newcommand{\gb}{\beta}
\renewcommand{\ge}{\epsilon}
\newcommand{\N}{\nabla}
\newcommand{\FF}{\mathcal F}
\newcommand{\CC}{\mathcal C}
\newcommand{\til}[1]{\widetilde{#1}}
\renewcommand{\bar}[1]{\overline{#1}}
\newcommand{\HH}{\mathcal H}
\newcommand{\IP}[1]{\left<#1 \right>}
\DeclareMathOperator{\Vol}{Vol}
\begin{document}

\title[The consistency and convergence of K-energy minimizing movements]{The
consistency and convergence of K-energy minimizing movements}

\begin{abstract} We
show that $K$-energy minimizing movements agree with smooth solutions to Calabi
flow as long as the latter exist.  As corollaries we conclude that in a general
K\"ahler class  long time solutions of Calabi flow minimize both $K$-energy and
Calabi energy.  Lastly, by applying convergence results from the theory of
minimizing movements, these results imply that
long time solutions to Calabi flow converge in the weak distance topology to
minimizers of
the $K$-energy functional on the metric completion of the space of K\"ahler
metrics, assuming one exists.  
\end{abstract}

\date{\today}

\author{Jeffrey Streets}
\address{Rowland Hall\\
         University of California, Irvine\\
         Irvine, CA 92617}
\email{\href{mailto:jstreets@uci.edu}{jstreets@uci.edu}}

\maketitle

\section{Introduction}

Let $(M^{2n}, \gw, J)$ be a compact K\"ahler manifold. Let $\HH = \{\phi \in
C^{\infty}(M) | \gw + \sqrt{-1} \del\delb \phi > 0 \}$ denote the space of
K\"ahler potentials, and given $\phi \in \HH$ let $\gw_{\phi} := \gw + \sqrt{-1}
\del
\delb \phi > 0$, and let $s_{\phi}$ denote the scalar curvature of the metric
$\gw_{\phi}$.  Furthermore, let $V = \Vol(\gw_{\phi})$, which is fixed for all
$\phi$, and set $\bar{s} = \frac{1}{V}
\int_M
s_{\phi} \gw_{\phi}^n$, which is also fixed for any $\phi$.  A one-parameter
family
of K\"ahler potentials $\phi_t$ is a solution of \emph{Calabi flow}
if
\begin{align} \label{Cflow}
\dt \phi =&\ s_{\phi} - \bar{s}.
\end{align}
\noindent This flow was introduced by Calabi in his seminal paper
\cite{CalabiExtremal} on extremal K\"ahler metrics.  In \cite{SMM} the author
proved the general long time existence of certain weak solutions to Calabi flow,
which we refer to herein as \emph{$K$-energy minimizing movements}, (\emph{KEMM}
for short).  In this paper we derive some further properties of these weak
solutions focusing on their relationship to smooth Calabi flows, and we discuss
the implications for Calabi flow.

The first main result is a consistency theorem relating KEMM and smooth
solutions to Calabi flow.  In particular we show that if the Calabi flow with
some initial condition $\phi_0$ exists smoothly on some time interval, then the
KEMM with initial condition $\phi_0$ agrees with the solution to Calabi flow on
that interval.

\begin{thm} \label{realizationthm} Let $(M^{2n}, \gw, J)$ be a compact K\"ahler
manifold.  Given $\phi_0 \in \HH$, suppose the Calabi flow with initial
condition $\phi_0$, call it $\phi_t$, exists smoothly on $[0, T)$.  Let
$\til{\phi}_t$ denote the $K$-energy minimizing movement with initial condition
$\phi_0$.  Then $\phi_t = \til{\phi}_t$ for all $t \in [0, T)$.
\end{thm}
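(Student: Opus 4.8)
The plan is to recognize the Calabi flow \eqref{Cflow} as the gradient flow of the $K$-energy $\mathcal{M}$ with respect to the Mabuchi metric, and to exploit the geodesic convexity of $\mathcal{M}$ on the metric completion $\overline{\HH}$ of $(\HH, d)$ --- a theorem of Berman--Berndtsson, also due to Chen--Li--P\u{a}un --- to prove that any \emph{smooth} Calabi flow satisfies the Evolution Variational Inequality (EVI$_0$): $\tfrac12\tfrac{d}{dt}\, d(\phi_t,\psi)^2 + \mathcal{M}(\phi_t) \leq \mathcal{M}(\psi)$ for a.e.\ $t$ and every finite-energy $\psi \in \overline{\HH}$. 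Indeed, since $\grad\mathcal{M}(\phi) = -(s_\phi - \bar{s})$, equation \eqref{Cflow} reads $\dot\phi_t = -\grad\mathcal{M}(\phi_t)$. Fix $\psi \in \HH$, and for each $t$ let $u \mapsto \gamma^t(u)$, $u \in [0,1]$, be the geodesic in $\overline{\HH}$ from $\phi_t$ to $\psi$ (unique, and $C^{1,1}$ by Chen's regularity theorem). Since $\phi_t$ is a smooth --- hence locally Lipschitz --- curve in $(\overline{\HH}, d)$, the first variation formula for the distance gives, for a.e.\ $t \in [0,T)$,
\begin{align*}
\frac{1}{2}\frac{d}{dt}\, d(\phi_t,\psi)^2
&= -\IP{\dot\gamma^t(0), \dot\phi_t}_{\phi_t}
= \IP{\dot\gamma^t(0), \grad\mathcal{M}(\phi_t)}_{\phi_t} \\
&= \frac{d}{du}\Big|_{u = 0^+}\mathcal{M}(\gamma^t(u))
\leq \mathcal{M}(\psi) - \mathcal{M}(\phi_t),
\end{align*}
the last inequality being geodesic convexity of $\mathcal{M}$ along $\gamma^t$. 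One then passes from $\psi \in \HH$ to arbitrary finite-energy $\psi \in \overline{\HH}$ by approximation, using lower semicontinuity of $\mathcal{M}$.

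Next I would compare with the minimizing movement. Recall from \cite{SMM} that $\til{\phi}_t$ arises as a limit, along some sequence $\tau_j \to 0$, of the piecewise-constant interpolants of the discrete scheme $\phi^0_\tau = \phi_0$, where $\phi^{k+1}_\tau$ minimizes $\psi \mapsto \mathcal{M}(\psi) + \tfrac{1}{2\tau}d(\psi,\phi^k_\tau)^2$ over $\overline{\HH}$. Each iterate has finite $K$-energy (inductively $\mathcal{M}(\phi^{k+1}_\tau) \leq \mathcal{M}(\phi^{k}_\tau) \leq \cdots \leq \mathcal{M}(\phi_0) < \infty$), so it is an admissible test point in the EVI$_0$ of the previous paragraph. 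Combining the minimality of $\phi^{k+1}_\tau$ with EVI$_0$ for $\phi_t$ evaluated at $\phi^k_\tau$, and telescoping over $k$, is exactly the standard error estimate for the minimizing movement scheme relative to an EVI$_0$ solution (see, e.g., Ambrosio--Gigli--Savar\'e): it yields $d(\phi^k_\tau, \phi_{k\tau}) \to 0$ as $\tau \to 0$, uniformly for $k\tau$ in compact subsets of $[0,T)$. Since this forces every such subsequential limit to agree with $\phi_t$ on $[0,T)$, and $\til{\phi}_t$ is such a limit with $\til{\phi}_0 = \phi_0$, we conclude $\til{\phi}_t = \phi_t$ for all $t \in [0,T)$.

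The step I expect to be the main obstacle is the first-variation identity. Because $d$ is the completion distance, the geodesic $\gamma^t$ is only $C^{1,1}$, its initial velocity $\dot\gamma^t(0)$ must be treated as an $L^2$ direction rather than a smooth vector field, and the identity $\IP{\dot\gamma^t(0), \grad\mathcal{M}(\phi_t)}_{\phi_t} = \tfrac{d}{du}\big|_{u=0^+}\mathcal{M}(\gamma^t(u))$ hinges on the one-sided differentiability of $\mathcal{M}$ along geodesics issuing from the smooth point $\phi_t$, with the expected value; one must also check that $t \mapsto \phi_t$ is absolutely continuous in $(\overline{\HH}, d)$ and that the first variation formula is valid at this regularity. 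A variant bypassing the explicit first variation uses that $(\overline{\HH}, d)$ is nonpositively curved (Calabi--Chen), so that the minimizing movement is itself the unique EVI$_0$ solution emanating from $\phi_0$; once the smooth flow is known to satisfy EVI$_0$, it must coincide with the minimizing movement, since adding the two inequalities gives $\tfrac{d}{dt}\, d(\phi_t,\til{\phi}_t)^2 \leq 0$ while both curves start at $\phi_0$. The remaining ingredients --- the gradient-flow identification, geodesic convexity of $\mathcal{M}$, and the discrete error estimate --- are standard or available in the literature.
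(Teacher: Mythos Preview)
Your approach is sound in outline but takes a genuinely different route from the paper's. You propose to establish EVI$_0$ for the smooth Calabi flow directly, via a first-variation formula for $d^2$ and geodesic convexity of the $K$-energy, and then invoke uniqueness of EVI$_0$ solutions in NPC spaces. The paper does neither. It works purely at the metric level, showing that $f(t)=d(\phi_t,\til\phi_t)$ has nonpositive upper Dini derivative: at each time $\tau$ it introduces the auxiliary KEMM $\psi$ with initial condition $\phi_\tau$ and compares $\phi$ to $\psi$ to first order using Mayer's ``unique steepest direction'' theorem (Theorem~\ref{uniquedirection}) together with the identification $\brs{\N_-\bar\nu}(\phi)=\sqrt{\CC(\phi)}$ at smooth points (Lemma~\ref{lowergradsmoothpoints}), the latter coming from Chen's inequality (Theorem~\ref{Kenergydecay}); distance nonincreasing for KEMMs (Theorem~\ref{distnonincr}) then closes the argument. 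Notably, the paper derives the EVI for smooth Calabi flow (Corollary~\ref{evicor}) only \emph{after} consistency, by combining Theorem~\ref{realizationthm} with the EVI already known for the KEMM (Lemma~\ref{minimizinglemma}); you are proposing to reverse that logic. Your route is conceptually cleaner and connects directly to the Ambrosio--Gigli--Savar\'e framework, but the step you rightly flag as the obstacle---a rigorous first-variation formula for $d^2$ along $C^{1,1}$ geodesics together with one-sided differentiability of $\nu$ at the smooth endpoint with the expected value---is a genuine analytic hurdle that requires nontrivial work beyond what is in the paper. The paper's purely metric argument sidesteps exactly this difficulty, at the price of relying on the finer structure theory of minimizing movements from Mayer's work.
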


\noindent This theorem has several direct consequences for smooth solutions to
Calabi flow coming from the theory of minimizing movements.  The first example
is a bound on the growth of distance along a solution to Calabi flow.

\begin{cor} \label{distancecor} Let $(M^{2n}, \gw, J)$ be a compact K\"ahler
manifold.  Given
$\phi_0 \in \HH$, if $\phi_t$ denotes a smooth Calabi flow with initial
condition
$\phi_0$ on $[0, T]$, then there is a constant $C = C(\phi_0, \gw, T)$ such that
for all $0
\leq s \leq t < T$ one has
\begin{align*}
d(\phi_s, \phi_t) \leq&\ C (t-s)^{\tfrac{1}{2}}.
\end{align*}
\end{cor}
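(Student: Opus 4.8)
The plan is to deduce this from Theorem \ref{realizationthm} together with the standard energy/metric estimates that are built into the theory of minimizing movements. By Theorem \ref{realizationthm}, the smooth Calabi flow $\phi_t$ coincides with the $K$-energy minimizing movement $\til\phi_t$ on $[0,T)$. The minimizing movement is, by construction, a limit of discrete ``minimizing movement'' (i.e.\ De Giorgi / JKO) approximants: for step size $\tau$ one iteratively chooses $\phi^{k}_\tau$ to minimize $\phi \mapsto \mathcal{K}(\phi) + \tfrac{1}{2\tau} d(\phi, \phi^{k-1}_\tau)^2$ starting from $\phi^0_\tau = \phi_0$. The key point I would use is the \emph{discrete energy inequality}: comparing the minimizer against the previous iterate yields, for each step,
\begin{align*}
\mathcal{K}(\phi^k_\tau) + \frac{1}{2\tau} d(\phi^k_\tau, \phi^{k-1}_\tau)^2 \leq \mathcal{K}(\phi^{k-1}_\tau),
\end{align*}
so summing over $k$ telescopes to $\sum_k d(\phi^k_\tau, \phi^{k-1}_\tau)^2 \leq 2\tau\,(\mathcal{K}(\phi_0) - \inf \mathcal{K})$. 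Since the $K$-energy is bounded below along a smooth Calabi flow on $[0,T]$ (its time-derivative is $-\int |s_\phi - \bar s|^2 \leq 0$, so $\mathcal{K}(\phi_t) \leq \mathcal{K}(\phi_0)$, and it is bounded below on the relevant sublevel set — this is where the constant's dependence on $\phi_0, \gw, T$ enters), the right-hand side is controlled by a constant $C_1 = C_1(\phi_0,\gw,T)$.

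Next I would pass from the discrete estimate to a Hölder-in-time bound for the discrete interpolants. For $0 \le s \le t < T$, write $s \in [(i-1)\tau, i\tau)$ and $t \in [(j-1)\tau, j\tau)$ with $j - i \approx (t-s)/\tau$. By the triangle inequality and Cauchy–Schwarz,
\begin{align*}
d(\phi^i_\tau, \phi^j_\tau) \leq \sum_{k=i+1}^{j} d(\phi^k_\tau, \phi^{k-1}_\tau) \leq (j-i)^{1/2} \left( \sum_{k=i+1}^{j} d(\phi^k_\tau, \phi^{k-1}_\tau)^2 \right)^{1/2} \leq \left( \frac{t-s}{\tau} + 1 \right)^{1/2} C_1^{1/2} \tau^{1/2},
\end{align*}
which, after absorbing the lower-order term, gives $d \le C (t-s)^{1/2}$ uniformly in $\tau$ (up to an additive $O(\tau^{1/2})$ that vanishes in the limit). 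Taking $\tau \to 0$ and using that the discrete interpolants converge in the weak distance $d$ to $\til\phi_\cdot = \phi_\cdot$, together with lower semicontinuity / continuity of $d$, yields the claimed estimate $d(\phi_s,\phi_t) \le C(t-s)^{1/2}$.

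The main obstacle is bookkeeping rather than conceptual: one must make sure the lower bound on $\mathcal{K}$ that feeds the constant $C$ is genuinely available on the portion of metric-completion space swept out by the flow and its discrete approximants on $[0,T]$ — this is why $T$ appears in $C$ — and one must invoke the precise convergence statement for KEMM from \cite{SMM} (convergence of the discrete scheme in the metric $d$, plus the discrete energy inequality) in the form needed here. A cleaner alternative, if the abstract theory in \cite{SMM} already records that $t \mapsto \til\phi_t$ is a locally absolutely continuous curve in $(\overline{\HH}, d)$ with $L^2_{loc}$ metric speed (the ``curve of maximal slope'' property), is to simply write $d(\phi_s,\phi_t) \le \int_s^t |\dot{\til\phi}_r|\,dr \le (t-s)^{1/2}\big(\int_s^t |\dot{\til\phi}_r|^2 dr\big)^{1/2}$ and bound the energy integral by $\mathcal{K}(\phi_0) - \inf_{[0,T]}\mathcal{K} =: C$; I would use whichever of these is stated in the cited work, with the De Giorgi interpolation argument above as the fallback.
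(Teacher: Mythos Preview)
Your approach is essentially the same as the paper's: identify the smooth flow with the KEMM via Theorem \ref{realizationthm} and then invoke the H\"older estimate built into Mayer's minimizing-movement theory. The paper simply cites \cite{Mayer} Theorem 2.2, with the constant pinned down by \cite{SMM} Lemma 5.8; you are in effect unpacking those citations via the discrete energy inequality and Cauchy--Schwarz, which is exactly how the paper proceeds in the closely related proof of Corollary \ref{distancecor2}.

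One point to tighten: your parenthetical justification for the lower bound on $\mathcal K$ is a red herring. Monotonicity of $\mathcal K$ along the \emph{smooth} flow gives only an upper bound $\mathcal K(\phi_t)\le \mathcal K(\phi_0)$, and in any case says nothing about the \emph{discrete iterates} $\phi^k_\tau$, which live in $\bar{\HH}$ and need not lie on the smooth trajectory. Without a global lower bound on $\bar\nu$ (which is precisely the extra hypothesis in Corollary \ref{distancecor2}), the telescoping sum $\sum_k d(\phi^k_\tau,\phi^{k-1}_\tau)^2 \le 2\tau(\bar\nu(\phi_0)-\bar\nu(\phi^j_\tau))$ is not immediately controlled. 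The genuine input here is the a priori estimate $d^2(\phi_0, W_\tau^j(\phi_0)) \le B\, j\tau$ for $j\tau \le T$ from \cite{Mayer} Lemma 1.11, whose constant $B$ has ``complicated dependencies'' that \cite{SMM} Lemma 5.8 shows reduce to $C(\phi_0,\gw,T)$. You correctly flag this as the remaining obstacle and defer to the cited work, so the proposal is sound once you drop the misleading smooth-flow heuristic and point directly to that lemma.
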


As the constant in the above corollary depends on $T$, it is difficult to apply
this effectively for solutions existing on an infinite time interval.  We
provide one further corollary in this direction.

\begin{cor} \label{distancecor2} Let $(M^{2n}, \gw, J)$ be a compact K\"ahler
manifold, and suppose the $K$-energy in $[\gw]$ is bounded below.  Given 
$\phi_0 \in \HH$, if $\phi_t$ denotes a smooth Calabi flow with initial
condition
$\phi_0$ on $[0, T]$, then there is a constant $C = C(\nu(\phi_0), [\gw])$ such
that
for all $0
\leq s \leq t < T$ one has
\begin{align*}
d(\phi_s, \phi_t) \leq&\ C (t-s)^{\tfrac{1}{2}}.
\end{align*}
\end{cor}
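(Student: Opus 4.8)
The plan is to follow the same route as Corollary \ref{distancecor}, but to exploit Theorem \ref{realizationthm} together with the gradient-flow structure to obtain a distance bound with constant depending only on the initial value of the $K$-energy and the class $[\gw]$, rather than on the length of the time interval. Recall that in the theory of minimizing movements for a functional $\nu$ that is bounded below, one has the fundamental \emph{a priori energy estimate}: along the minimizing movement the map $t \mapsto \nu(\til\phi_t)$ is nonincreasing, and moreover the metric speed is controlled in an integrated, $L^2$-in-time sense by the total energy drop. Concretely, for a curve of maximal slope (which a KEMM is, by the convergence theory invoked in \cite{SMM}) one has, for $0 \leq s \leq t < T$,
\begin{align} \label{energyineq}
\tfrac{1}{2} \int_s^t \brs{\til\phi'_r}^2 \, dr + \tfrac{1}{2} \int_s^t \brs{\del \nu}^2(\til\phi_r)\, dr = \nu(\til\phi_s) - \nu(\til\phi_t),
\end{align}
where $\brs{\til\phi'_r}$ denotes metric speed. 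Since $\nu$ is bounded below on $[\gw]$, the right side is at most $\nu(\phi_0) - \inf_{[\gw]} \nu =: E_0$, a quantity depending only on $\nu(\phi_0)$ and $[\gw]$.

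First I would invoke Theorem \ref{realizationthm} to identify $\til\phi_r$ with the smooth Calabi flow $\phi_r$ on $[0,T)$, so that along the smooth flow the metric speed is $\brs{\phi'_r} = \nm{s_{\phi_r} - \bar s}{L^2(\gw_{\phi_r})}$ (this is exactly the Mabuchi-metric norm of the velocity $\dt\phi = s_\phi - \bar s$), and \eqref{energyineq} reads
\begin{align} \label{Cflowenergy}
\tfrac{1}{2}\int_s^t \nm{s_{\phi_r} - \bar s}{L^2(\gw_{\phi_r})}^2\, dr \leq \nu(\phi_s) - \nu(\phi_t) \leq E_0.
\end{align}
Then, since $d$ is a length metric (or at least since $d(\phi_s,\phi_t)$ is bounded by the length of the Calabi-flow path joining them), I would estimate
\begin{align*}
d(\phi_s, \phi_t) \leq \int_s^t \brs{\phi'_r}\, dr = \int_s^t \nm{s_{\phi_r}-\bar s}{L^2(\gw_{\phi_r})}\, dr \leq (t-s)^{1/2} \left( \int_s^t \nm{s_{\phi_r}-\bar s}{L^2(\gw_{\phi_r})}^2\, dr\right)^{1/2}
\end{align*}
by Cauchy--Schwarz, and combining with \eqref{Cflowenergy} gives $d(\phi_s,\phi_t) \leq (2E_0)^{1/2}(t-s)^{1/2}$, which is the claim with $C = (2E_0)^{1/2} = \big(2(\nu(\phi_0) - \inf_{[\gw]}\nu)\big)^{1/2}$.

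The main obstacle is making the energy-dissipation identity \eqref{energyineq}/\eqref{Cflowenergy} rigorous for the KEMM and transferring it to the smooth flow: one must know that the KEMM is a curve of maximal slope for $\nu$ in the metric completion $\overline{\HH}$, that the metric speed coincides with the Mabuchi norm of $s_\phi - \bar s$ along the smooth Calabi flow (the derivative formula $\tfrac{d}{dr}\nu(\phi_r) = -\nm{s_{\phi_r}-\bar s}{L^2}^2$ is the classical computation of Calabi/Chen, so the energy balance is in fact an equality along the smooth flow and the inequality direction needed is the elementary one), and that $d(\phi_s,\phi_t)$ does not exceed the Mabuchi length of the smooth connecting path — all of which are either standard facts about $(\HH, d)$ or immediate from Theorem \ref{realizationthm}. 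Once these ingredients are in place the Cauchy--Schwarz step and the substitution of the lower bound on $\nu$ are routine.
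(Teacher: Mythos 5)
Your proof is correct, but it takes a genuinely different route from the paper's. You work entirely on the smooth side: using the classical identity $\tfrac{d}{dr}\nu(\phi_r) = -\CC(\phi_r)$ along Calabi flow, the bound $d(\phi_s,\phi_t) \leq \int_s^t \sqrt{\CC(\phi_r)}\,dr$ from the definition of the Mabuchi distance, Cauchy--Schwarz, and the lower bound on $\nu$. These ingredients yield $d(\phi_s,\phi_t) \leq (t-s)^{1/2}(\nu(\phi_s)-\nu(\phi_t))^{1/2} \leq (t-s)^{1/2}(\nu(\phi_0) - \inf\nu)^{1/2}$; note the energy identity is an \emph{equality} along the smooth flow, so your factor of $\sqrt{2}$ is a harmless loss. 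In fact, the first three steps already appear verbatim in the proof of Lemma \ref{lowergradsmoothpoints}, so your argument amounts to combining that computation with the hypothesis that $\nu$ is bounded below.

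The paper instead works at the discrete level: it revisits Mayer's Lemma 1.11 for the resolvent iterates $W_\tau^j(\phi_0)$, derives $d^2(W_\tau^{j+1}\phi_0, W_\tau^j\phi_0) \leq 2\tau\big(\bar\nu(W_\tau^j\phi_0)-\bar\nu(W_\tau^{j+1}\phi_0)\big)$ directly from the minimality defining the resolvent, telescopes via Cauchy--Schwarz to get $d^2(\phi_0, W_\tau^j\phi_0) \leq 2j\tau(\bar\nu(\phi_0) - \inf\bar\nu)$, and then passes to the continuous limit via Mayer's Theorem 2.2. What the paper's route buys is generality: the resolvent argument applies to an arbitrary KEMM with initial data in $\bar\HH$, not just to smooth Calabi flows, and this is the form one wants when working with weak solutions. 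What your route buys is simplicity: for the statement as written (about a smooth Calabi flow) you do not need Theorem \ref{realizationthm}, Mayer's Theorem 2.2, or the abstract energy-dissipation formalism for curves of maximal slope at all --- the smooth computation stands alone. In that sense the scaffolding about curves of maximal slope in your write-up is unnecessary overhead, but the underlying proof it motivates is sound.
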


\begin{rmk} A direct estimate of $d(\phi_0, \phi_t)$ along a solution to Calabi
flow using the variation of
length and the a priori bound on Calabi energy yields a linear growth rate for
large times.  Thus Corollaries \ref{distancecor} and \ref{distancecor2} yield a
nontrivial improvement
of this estimate.
\end{rmk}

Another consequence of Theorem \ref{realizationthm} shows that long time
solutions of Calabi flow always realize the infimum of $K$-energy, denoted
$\nu$,
and Calabi
energy, denoted $\CC$.  A key component of the proof is a type of ``evolutionary
variational
inequality'' for the $K$-energy along a solution to Calabi flow.  As this is of
some independent interest we include the statement here.

\begin{cor} \label{evicor} Let $(M^{2n}, \gw, J)$ be a compact K\"ahler
manifold.  Given $\phi_t$ a solution to Calabi flow on $[0, T]$ and $\psi \in
{\HH}$, then one has
\begin{align*}
 d^2(\phi_{t+s}, \psi) \leq d^2(\phi_t, \psi) - 2s \left( \nu(\phi_{t+s}) -
\nu(\psi) \right)
\end{align*}
for all $t, s \geq 0, t+s \leq T$.
\end{cor}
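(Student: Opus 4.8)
The plan is to obtain the stated inequality as the integrated form of the \emph{evolutionary variational inequality} satisfied by the $K$-energy minimizing movement, and then to invoke Theorem \ref{realizationthm} to pass to the smooth Calabi flow. Writing $\til{\phi}_t$ for the KEMM with $\til{\phi}_0 = \phi_0$ and $\overline{\HH}$ for the metric completion of $(\HH, d)$, I would first establish
\begin{align*}
d^2(\til{\phi}_{t+s}, \psi) \leq d^2(\til{\phi}_t, \psi) - 2s\left( \nu(\til{\phi}_{t+s}) - \nu(\psi) \right)
\end{align*}
for all $t, s \geq 0$, and then apply Theorem \ref{realizationthm}, together with continuity in $t$ of both curves to handle the borderline case $t + s = T$, to replace $\til{\phi}$ by $\phi$.

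The first step is the standard \emph{discrete} evolutionary variational inequality obeyed by the minimizing movement scheme defining $\til{\phi}$. Recall $\til{\phi}_t = \lim_{\tau \to 0} \phi^{\floor{t/\tau}}_\tau$, where $\phi^0_\tau = \phi_0$ and, inductively, $\phi^k_\tau$ minimizes $\psi \mapsto \nu(\psi) + \tfrac{1}{2\tau} d^2(\psi, \phi^{k-1}_\tau)$ over $\overline{\HH}$. Fixing $k$ and setting $u = \phi^k_\tau$, $v = \phi^{k-1}_\tau$, I would run the comparison along the constant speed geodesic $\gs \colon [0,1] \to \overline{\HH}$ from $u$ to $\psi$: geodesic convexity of $\nu$ and the CAT$(0)$ inequality $d^2(\gs(r), v) \leq (1-r) d^2(u,v) + r\, d^2(\psi, v) - r(1-r) d^2(u, \psi)$ together bound $\nu(\gs(r)) + \tfrac{1}{2\tau}d^2(\gs(r),v)$ above by its secant line minus $\tfrac{r(1-r)}{2\tau}d^2(u,\psi)$, so the minimality $\nu(u) + \tfrac{1}{2\tau}d^2(u,v) \leq \nu(\gs(r)) + \tfrac{1}{2\tau}d^2(\gs(r),v)$ and the limit $r \to 0$ give $\nu(u) + \tfrac{1}{2\tau}d^2(u,v) \leq \nu(\psi) + \tfrac{1}{2\tau}d^2(\psi,v) - \tfrac{1}{2\tau}d^2(u,\psi)$, that is,
\begin{align*}
d^2(\phi^k_\tau, \psi) - d^2(\phi^{k-1}_\tau, \psi) \leq 2\tau\left( \nu(\psi) - \nu(\phi^k_\tau) \right).
\end{align*}
Here geodesic convexity and lower semicontinuity of $\nu$, the existence of geodesics, and the CAT$(0)$ comparison for $\overline{\HH}$ are all part of the structural theory underlying \cite{SMM} (Calabi--Chen, Berman--Berndtsson, Darvas), so this step is essentially a citation.

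The second step is to sum the discrete inequality over $k$ from $\floor{t/\tau}+1$ to $\floor{(t+s)/\tau}$. The scheme is $\nu$-nonincreasing, since $\nu(\phi^k_\tau) \leq \nu(\phi^k_\tau) + \tfrac{1}{2\tau}d^2(\phi^k_\tau,\phi^{k-1}_\tau) \leq \nu(\phi^{k-1}_\tau)$; hence every term $-\nu(\phi^k_\tau)$ in the sum is at most $-\nu(\phi^{\floor{(t+s)/\tau}}_\tau)$, the left side telescopes, and there are $\floor{(t+s)/\tau}-\floor{t/\tau} = s/\tau + O(1)$ terms. Letting $\tau \to 0$ along the sequence for which the scheme converges, and using continuity of $d$, convergence of the piecewise constant interpolants to $\til{\phi}$, and lower semicontinuity of $\nu$ (which yields $\limsup_\tau ( -\nu(\phi^{\floor{(t+s)/\tau}}_\tau) ) \leq -\nu(\til{\phi}_{t+s})$), one recovers the displayed inequality for $\til{\phi}$, which rearranges to the claim. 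If \cite{SMM} already records that $\til{\phi}$ is the $\mathrm{EVI}_0$ gradient flow of $\nu$, one may instead simply integrate $\tfrac12 \tfrac{d^+}{dr} d^2(\til{\phi}_r, \psi) \leq \nu(\psi) - \nu(\til{\phi}_r)$ over $[t,t+s]$ and again use that $\nu$ decreases along the flow.

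All the genuinely new content sits in Theorem \ref{realizationthm}; the rest is soft minimizing-movement theory, and the only real obstacle is bookkeeping: controlling the $O(\tau)$ error in the count of summands, checking that the interpolants in \cite{SMM} converge in a topology strong enough that $d(\phi^{\floor{\cdot/\tau}}_\tau, \psi) \to d(\til{\phi}_\cdot, \psi)$ pointwise (or passing to the relevant subsequence), and applying lower semicontinuity of $\nu$ with the correct sign so that the $-\nu(\phi^{\floor{(t+s)/\tau}}_\tau)$ terms survive the limit.
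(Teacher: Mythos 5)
Your proposal is correct and is essentially the paper's route: the paper records the EVI for $K$-energy minimizing movements as Lemma \ref{minimizinglemma} (reproducing Mayer's Lemma 2.8 via the one-step resolvent inequality obtained from the NPC/convexity comparison along a geodesic to $\psi$, then iterating with $\tau = s/n$, using monotonicity of $\bar{\nu}$, and passing to the limit with lower semicontinuity), and Corollary \ref{evicor} is then immediate from that lemma together with Theorem \ref{realizationthm}. Your one-step discrete inequality, the summation using $\nu$-monotonicity of the scheme, and the $\tau \to 0$ limit is precisely this argument, just phrased in terms of the piecewise-constant interpolants $\phi^k_\tau$ rather than powers of the resolvent $W_\tau$.
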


\begin{cor} \label{minimizingcor} Let $(M^{2n}, \gw, J)$ be a compact K\"ahler
manifold.  Given $\bar{\phi_0} \in \bar{\HH}$ the KEMM with initial condition
$\bar{\phi}_0$ satisfies
\begin{align*}
 \lim_{t \to \infty} \bar{\nu}(\bar{\phi}_t) = \inf_{\phi \in {\HH}}
{\nu}({\phi}) .
\end{align*}
Furthermore, given $\phi_t \in \HH$ a solution of
Calabi flow on $[0,\infty)$, one has
\begin{align*}
 \lim_{t \to \infty} \nu(\phi_t) = \inf_{\phi \in \HH} \nu(\phi).
\end{align*}
\end{cor}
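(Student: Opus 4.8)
The plan is to reduce both statements to the first one, which is itself a standard consequence of the theory of minimizing movements combined with facts about the $K$-energy on the metric completion $\bar{\HH}$. First I would recall that the KEMM $\bar{\phi}_t$ with initial condition $\bar{\phi}_0$ is, by construction in \cite{SMM}, a curve of maximal slope (equivalently, an energy-dissipating gradient flow trajectory) for the lower semicontinuous extension $\bar{\nu}$ of the $K$-energy on the complete metric space $(\bar{\HH}, d)$. A basic structural fact for such curves is that $t \mapsto \bar{\nu}(\bar{\phi}_t)$ is nonincreasing and, more precisely, satisfies the energy-dissipation equality, so that $\bar{\nu}(\bar{\phi}_t)$ decreases to some limit $\ell \geq \inf_{\bar{\HH}} \bar{\nu} = \inf_{\HH} \nu$ (the last equality because $\HH$ is dense in $\bar{\HH}$ and $\bar{\nu}$ is the lsc relaxation of $\nu$). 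The content is to rule out $\ell > \inf \nu$. I would argue that along the gradient flow the total energy dissipated is $\bar{\nu}(\bar{\phi}_0) - \ell$, which is finite; if $\ell$ were strictly above the infimum then one could find a competitor $\psi$ with $\bar{\nu}(\psi) < \ell$, and then the evolutionary variational inequality from Corollary \ref{evicor} — which extends to KEMM by the general EVI theory underlying \cite{SMM}, since it holds for smooth Calabi flow and passes to limits under the variational scheme — gives
\begin{align*}
d^2(\bar{\phi}_{t+s}, \psi) \leq d^2(\bar{\phi}_t, \psi) - 2s\left(\bar{\nu}(\bar{\phi}_{t+s}) - \bar{\nu}(\psi)\right) \leq d^2(\bar{\phi}_t, \psi) - 2s\left(\ell - \bar{\nu}(\psi)\right),
\end{align*}
so $d^2(\bar{\phi}_t, \psi)$ decreases by at least the fixed positive amount $2(\ell - \bar{\nu}(\psi))$ per unit time, forcing it negative in finite time — a contradiction. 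Hence $\ell = \inf_{\HH}\nu$, proving the first claim.

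For the second claim, let $\phi_t \in \HH$ be a smooth Calabi flow on $[0,\infty)$. By Theorem \ref{realizationthm}, $\phi_t$ coincides with the KEMM $\tilde{\phi}_t$ with initial condition $\phi_0$ for all $t \in [0,\infty)$ (the theorem applies on every $[0,T)$, hence on the whole half-line). Therefore $\nu(\phi_t) = \bar{\nu}(\tilde{\phi}_t)$ for all $t$, and applying the first part of the corollary to $\bar{\phi}_0 = \phi_0$ gives $\lim_{t\to\infty} \nu(\phi_t) = \lim_{t\to\infty}\bar{\nu}(\tilde{\phi}_t) = \inf_{\HH}\nu$, as desired. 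Here one uses that $\bar{\nu}$ restricted to $\HH$ agrees with $\nu$, so no discrepancy arises between the relaxed functional and the smooth one along the smooth trajectory.

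The main obstacle I anticipate is the justification that the evolutionary variational inequality of Corollary \ref{evicor} — stated there only for smooth Calabi flow and a potential $\psi \in \HH$ — is actually available for the KEMM $\bar{\phi}_t$ with $\bar{\phi}_0 \in \bar{\HH}$, which is what the contradiction argument above requires. This needs either a direct derivation of the EVI for KEMM from the discrete minimizing-movement (JKO-type) scheme of \cite{SMM}, using geodesic convexity (or the relevant weak convexity established there) of $\nu$ along generalized geodesics in $\bar{\HH}$, together with the nonpositive-curvature property of $(\bar{\HH},d)$, or an approximation argument passing Corollary \ref{evicor} to the limit along a sequence of smooth (or more regular) initial data converging to $\bar{\phi}_0$ in $d$, invoking lower semicontinuity of $\bar{\nu}$ and stability of KEMM under convergence of initial conditions. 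I would also need to confirm that the limit $\ell$ is attained monotonically (so that $\bar{\nu}(\bar{\phi}_{t+s}) \geq \ell$ can be inserted), which follows from monotonicity of energy along curves of maximal slope; this is routine once the EVI or the energy-dissipation inequality is in hand.
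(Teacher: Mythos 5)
Your argument is correct, but it takes a more explicit route than the paper, which treats the first claim as a one-line citation: the paper simply invokes Mayer's Theorem 2.39 (that the continuous-time minimizing movement drives the convex lsc functional to its infimum) together with the observation that $\inf_{\bar{\HH}}\bar{\nu} = \inf_{\HH}\nu$, and then gets the second claim from Theorem \ref{realizationthm} exactly as you do. What you have written is essentially a proof of Mayer's Theorem 2.39 from scratch, via the evolutionary variational inequality and a distance-decay contradiction. This is a perfectly sound alternative, and in fact it is the same strategy the paper itself uses in the very next proof (Corollary \ref{minimizingcor2}). The one thing worth pointing out is that the ``main obstacle'' you flag --- upgrading the EVI of Corollary \ref{evicor} from smooth Calabi flow to arbitrary KEMM --- is not actually an obstacle: the paper already records exactly that inequality in full generality as Lemma \ref{minimizinglemma} (Mayer's Lemma 2.8), valid for any KEMM $\phi_t$ and any $\psi \in \bar{\HH}$. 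So you should cite Lemma \ref{minimizinglemma} directly rather than Corollary \ref{evicor}, and then no approximation or stability argument is needed. (Monotonicity of $t \mapsto \bar{\nu}(\bar{\phi}_t)$, which you also want, follows from the same lemma by taking $\psi = \bar{\phi}_t$, or from the discrete scheme directly.) With that substitution your proof is complete and self-contained; the paper's version is shorter by outsourcing the work to Mayer, while yours makes the mechanism visible.
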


\begin{cor} \label{minimizingcor2} Let $(M^{2n}, \gw, J)$ be a compact K\"ahler
manifold.  Given $\phi_t$ a solution of Calabi flow on $[0,\infty)$, one has
\begin{align*}
\lim_{t \to \infty} \CC(\phi_t) = \inf_{\phi \in \HH} \CC(\phi).
\end{align*}
\end{cor}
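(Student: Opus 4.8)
The plan is to leverage Corollary \ref{minimizingcor}, which gives $\lim_{t\to\infty}\nu(\phi_t) = \inf_{\phi\in\HH}\nu(\phi)$, together with the standard first-variation identities relating the $K$-energy, the Calabi energy, and the Calabi flow. Recall that along Calabi flow one has $\dtau \nu(\phi_\tau) = -\CC(\phi_\tau)$, so that $\CC(\phi_\tau) \geq 0$ is monotone nonincreasing along the flow (this monotonicity is classical, going back to Calabi and Chen); hence $\lim_{t\to\infty}\CC(\phi_t)$ exists, call it $\CC_\infty \geq 0$. The goal is to identify $\CC_\infty$ with $\inf_{\phi\in\HH}\CC(\phi)$.

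First I would establish the lower bound $\CC_\infty \geq \inf_\HH\CC$, which is immediate since $\CC(\phi_t)\geq \inf_\HH\CC$ for every $t$. The substance is the reverse inequality. Here I would argue by contradiction: suppose $\CC_\infty > \inf_\HH\CC + 2\delta$ for some $\delta>0$, and pick $\psi\in\HH$ with $\CC(\psi) < \inf_\HH\CC + \delta$. The key point is that $\CC$ and $\nu$ are linked: if $\CC(\psi)$ is small then $\psi$ is nearly a critical point of $\nu$, but more usefully, I want to play off the fact that along the flow $\nu$ is \emph{decreasing at the definite rate} $\CC(\phi_t)\geq \CC_\infty$ for all large $t$, so $\nu(\phi_t)\to-\infty$ — which, combined with Corollary \ref{minimizingcor}, forces $\inf_\HH\nu = -\infty$. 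Thus I must split into two cases.

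Case 1: $\inf_\HH\nu > -\infty$. Then since $\dtau\nu(\phi_\tau) = -\CC(\phi_\tau) \leq -\CC_\infty$ and $\nu(\phi_\tau)$ is bounded below, integrating from $0$ to $\infty$ gives $\CC_\infty = 0$. But $0 \leq \inf_\HH \CC \leq \CC_\infty = 0$, so $\CC_\infty = \inf_\HH\CC = 0$ and we are done. Case 2: $\inf_\HH\nu = -\infty$. This is the delicate case where the previous argument gives no information, and here is where Corollary \ref{evicor} enters. Applying the evolutionary variational inequality with test function $\psi\in\HH$ and then optimizing over $\psi$, I obtain, for any $t,s\geq 0$,
\begin{align*}
d^2(\phi_{t+s},\psi) \leq d^2(\phi_t,\psi) - 2s\bigl(\nu(\phi_{t+s}) - \nu(\psi)\bigr).
\end{align*}
Rearranging, $2s\bigl(\nu(\phi_{t+s}) - \nu(\psi)\bigr) \leq d^2(\phi_t,\psi)$, i.e. $\nu(\phi_{t+s}) \leq \nu(\psi) + \tfrac{1}{2s}d^2(\phi_t,\psi)$. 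Differentiating the relation $\tfrac{d}{ds}\bigl[d^2(\phi_{t+s},\psi)/2 + s(\nu(\phi_{t+s})-\nu(\psi))\bigr]\leq 0$ at $s=0$, or more directly using the first variation of $d^2(\cdot,\psi)$ along Calabi flow — which produces a $\CC$-type term once one recognizes $\dtau\phi = s_\phi - \bar s$ and computes $\tfrac12\tfrac{d}{d\tau}d^2(\phi_\tau,\psi)$ — I expect to extract an inequality of the form $\CC(\phi_t) \leq \nu(\phi_t) - \nu(\psi) + (\text{boundary terms controlled by distance})$, or, more cleanly, a differential inequality showing $\CC(\phi_t)$ is itself controlled by a subsolution argument that forces $\CC(\phi_t)\to\inf_\HH\CC$.

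\textbf{Main obstacle.} The genuinely hard part is Case 2, and more precisely making rigorous the passage from the evolutionary variational inequality (which controls $\nu$ and $d$) to a statement about the decay of $\CC$. The clean implication would be: combine $\lim\nu(\phi_t)=\inf_\HH\nu$ with the convexity of $\nu$ along geodesics in $\bar\HH$ and the fact that $\CC(\phi_t)^{1/2}$ bounds the metric speed $|\dot\phi_t|$, to show that if $\CC(\phi_t)$ stayed bounded away from $\inf_\HH\CC$ the flow would have to "overshoot" the infimum of $\nu$, contradicting monotonicity. I anticipate needing: (i) the lower semicontinuity and geodesic convexity of $\nu$ on $\bar\HH$ (available from the theory invoked for KEMM in \cite{SMM}); (ii) the identity $\dtau\nu(\phi_\tau) = -\CC(\phi_\tau)$ and the fact that $\CC^{1/2}$ is the metric slope of $\nu$ at smooth points; and (iii) a slope-vs-infimum comparison: along a curve of maximal slope for a convex functional, the slope at time $t$ is bounded above by $(\nu(\phi_t)-\inf\nu)$ divided by a distance, which is exactly what the EVI in Corollary \ref{evicor} encodes. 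Granting these, $\CC(\phi_t)^{1/2} = |\text{slope of }\nu|(\phi_t) \to |\text{slope of }\nu|_{\min}$, and one checks this limiting slope equals $\bigl(\inf_\HH\CC\bigr)^{1/2}$ — using that $\inf_\HH\CC$ is attained in the limit (or as a limit) by a minimizing sequence whose slopes converge. The remaining work is bookkeeping to ensure all limits are taken along the \emph{same} sequence and that the abstract minimizing-movement slope coincides with the smooth $\CC^{1/2}$ via Theorem \ref{realizationthm}.
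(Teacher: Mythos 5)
Your Case 1 (when $\inf_\HH\nu > -\infty$) is correct: monotonicity of $\CC$ along the flow plus $\int_0^\infty\CC(\phi_t)\,dt \leq \nu(\phi_0)-\inf_\HH\nu < \infty$ forces $\CC(\phi_t)\to 0$, and then $0\leq\inf_\HH\CC\leq\CC(\phi_t)\to 0$ closes that case. But you are right to flag Case 2 as the obstacle, and the plan you sketch cannot work as stated: your proposed ``slope-vs-infimum comparison'' bounds the slope by $\nu(\phi_t)-\inf\nu$ divided by a distance, which is vacuous when $\inf_\HH\nu=-\infty$; and Corollary \ref{evicor} alone, applied to the single flow $\phi_t$, does not produce a $\CC$-type term in the way you anticipate. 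The missing idea is a \emph{comparison flow}.

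The paper avoids the case split entirely. Supposing $\beta:=\lim_{t\to\infty}\CC(\phi_t) > \inf_\HH\CC$, pick $\psi\in\HH$ with $\alpha:=\CC(\psi)<\beta$ and run the KEMM $\psi_t$ from $\psi$. By Lemma \ref{mayer4} and Lemma \ref{lowergradsmoothpoints}, $\brs{\N_-\bar{\nu}}(\psi_t)\leq\sqrt{\alpha}$ for all $t$, so (via the identity $\bar{\nu}(\psi_0)-\bar{\nu}(\psi_t)=\int_0^t\brs{\N_-\bar{\nu}}^2(\psi_s)\,ds$) one has $\bar{\nu}(\psi_t)\geq\bar{\nu}(\psi_0)-\alpha t$, whereas $\CC(\phi_s)\geq\beta$ for all $s$ gives $\bar{\nu}(\phi_t)\leq\bar{\nu}(\phi_0)-\beta t$; hence $\bar{\nu}(\psi_t)-\bar{\nu}(\phi_t)\geq(\beta-\alpha)t - C$ grows linearly. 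But Lemma \ref{minimizinglemma} for $\psi_t$ with test point $\phi_t$, together with the distance-nonincreasing property $d(\psi_t,\phi_t)\leq d(\psi_0,\phi_0)$ (Theorem \ref{distnonincr}), yields with $s=1$
\begin{align*}
0\leq d^2(\psi_{t+1},\phi_t)\leq d^2(\psi_0,\phi_0)-2\bigl(\bar{\nu}(\psi_{t+1})-\bar{\nu}(\phi_t)\bigr),
\end{align*}
and since $\bar{\nu}(\psi_t)-\bar{\nu}(\psi_{t+1})\leq\alpha$, the right-hand side tends to $-\infty$, a contradiction. So the key step you need is to introduce a second KEMM from a competitor with smaller Calabi energy and play its slower $K$-energy decay against the EVI and the distance-contraction property; this disposes of both of your cases at once, with no need for $\inf_\HH\nu$ to be finite.
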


Next we establish convergence of $K$-energy minimizing movements to $K$-energy
minimizers in the weak distance topology, assuming such a minimizer
exists.  This is a direct consequence of a general theorem on weak convergence
of minimizing
movements to minimizers proved by Ba\u{c}\'ak (\cite{Bacak} Theorem 1.5).  As
the proof of this theorem is
ultimately spread through many papers, we include a self-contained exposition in
\S \ref{convsec} for convenience.

\begin{thm} \label{convthm} Let $(M^{2n}, \gw, J)$ be compact K\"ahler manifold.
 Suppose $\bar{\phi} \in \bar{\HH}$ is a minimizer for $\bar{\nu}$.  Given
$\bar{\phi}_0 \in \bar{\HH}$,
the KEMM with initial condition $\bar{\phi}_0$ exists for all time and converges
weakly to a minimizer for $\bar{\nu}$ in the distance topology.
\end{thm}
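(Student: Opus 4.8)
The strategy is to combine the long-time existence of the KEMM (from \cite{SMM}) with the abstract weak convergence theorem of Ba\v{c}\'ak (\cite{Bacak}, Theorem 1.5) for minimizing movements of a lower semicontinuous, convex functional on a complete CAT(0) space. The key structural facts we need are: (1) the metric completion $\bar\HH$ of the space of K\"ahler potentials, equipped with the Mabuchi distance $d$, is a complete geodesic space which is nonpositively curved (CAT(0)) --- this is by now standard, going back to Calabi--Chen and established rigorously in the completion by Darvas and collaborators; (2) the $K$-energy extends to a lower semicontinuous, (geodesically) convex functional $\bar\nu$ on $\bar\HH$; and (3) the KEMM with initial condition $\bar\phi_0$ is precisely the minimizing movement (in the sense of De Giorgi / the generalized gradient flow) for $\bar\nu$ on this space. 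Fact (3) is essentially the definition of KEMM used in \cite{SMM}, so I would recall it, and facts (1)--(2) I would cite, pointing also to the self-contained exposition promised in \S\ref{convsec}.

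With this framework in place the proof is short. First, since $\bar\phi \in \bar\HH$ is assumed to be a minimizer of $\bar\nu$, the functional $\bar\nu$ is in particular bounded below on $\bar\HH$; together with its lower semicontinuity and convexity, this is exactly the hypothesis needed to invoke the abstract theory. Long-time existence of the KEMM $\bar\phi_t$ then follows: the minimizing movement scheme produces, for each step size, a discrete trajectory whose energy is nonincreasing, and the a priori bound $\bar\nu \ge \bar\nu(\bar\phi)$ prevents the trajectory from running off to infinity in finite time; passing to the limit gives a curve defined for all $t \in [0,\infty)$. This step already appears in \cite{SMM}, so I would simply cite it. Second, apply Ba\v{c}\'ak's theorem directly: for a convex lsc functional bounded below on a complete CAT(0) space possessing a minimizer, the gradient flow (minimizing movement) trajectory from any initial point converges weakly (in the sense of the weak topology on CAT(0) spaces, i.e.\ via asymptotic centers) as $t \to \infty$ to some minimizer of the functional. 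This yields the conclusion verbatim.

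The main obstacle --- and the reason \S\ref{convsec} is included --- is not any single hard estimate but rather assembling the hypotheses of the abstract convergence theorem from results scattered across the literature: one must verify that $(\bar\HH, d)$ is genuinely CAT(0) and complete, that $\bar\nu$ is genuinely convex and lsc on the completion (not just on the smooth locus $\HH$), and that the notion of ``minimizing movement'' used in \cite{SMM} coincides with the gradient-flow notion for which Ba\v{c}\'ak's theorem is stated. Each of these is known, but the convexity of $\bar\nu$ along geodesics in the completion (Berman--Berndtsson, Darvas) and the identification of the two flow notions require care. A secondary subtlety is the meaning of ``weak convergence'': the conclusion is convergence in the weak topology of the CAT(0) space $\bar\HH$, which must be stated precisely (via asymptotic centers / the property that every geodesic subsequence has the limit as its asymptotic center), and one should note this is genuinely weaker than $d$-convergence. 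Once the dictionary between \cite{SMM}'s setup and the hypotheses of \cite{Bacak} is in place, there is essentially nothing left to prove.
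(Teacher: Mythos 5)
Your proposal is correct and follows essentially the same route as the paper: establish that $(\bar{\HH}, d)$ is complete NPC and that $\bar{\nu}$ is lower semicontinuous and convex (the paper cites \cite{SMM} Lemma~5.9 and \S~5 for these, rather than the later Darvas/Berman--Berndtsson literature), recall long-time existence of the KEMM from \cite{SMM}, and then invoke Ba\v{c}\'ak's weak-convergence theorem for minimizing movements. The paper's proof is exactly this two-line deduction, with \S~\ref{convsec} supplying the self-contained exposition of Ba\v{c}\'ak's theorem that you correctly identify as the real content.
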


\noindent Combining Theorems \ref{realizationthm} and \ref{convthm}, and using
the result of Chen-Tian \cite{ChenTian} that constant scalar curvature metrics
are minimizers of $K$-energy yields the
obvious corollary:

\begin{cor} \label{convcor} Let $(M^{2n}, \gw, J)$ be a compact K\"ahler
manifold and suppose
$\phi_{\infty} \in \HH$ satisfies $s_{\phi_{\infty}} \equiv c$.  Then any
solution to Calabi flow which exists smoothly on $[0, \infty)$ converges weakly
to a minimizer for $\bar{\nu}$ in the distance topology.
\end{cor}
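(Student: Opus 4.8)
The plan is to chain together the two main theorems of the paper with the Chen–Tian result. Suppose $\phi_t$ is a smooth Calabi flow on $[0,\infty)$ with initial condition $\phi_0 \in \HH$. First I would invoke Theorem \ref{realizationthm}: since the smooth flow exists on all of $[0, \infty)$, for every finite $T$ the $K$-energy minimizing movement $\til\phi_t$ with initial condition $\phi_0$ agrees with $\phi_t$ on $[0,T)$, and hence $\phi_t = \til\phi_t$ for all $t \in [0,\infty)$. So the smooth flow simply \emph{is} the KEMM with initial condition $\phi_0 \in \HH \subset \bar\HH$.

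Next I would bring in the hypothesis and Chen–Tian \cite{ChenTian}: the existence of $\phi_\infty \in \HH$ with $s_{\phi_\infty} \equiv c$ (necessarily $c = \bar s$) means $\gw_{\phi_\infty}$ is a constant scalar curvature K\"ahler metric, and by \cite{ChenTian} such a metric is a minimizer of the $K$-energy $\nu$ on $\HH$. Since $\nu$ extends to $\bar\nu$ on the metric completion $\bar\HH$ and $\HH$ is dense, $\phi_\infty$ is also a minimizer of $\bar\nu$ on $\bar\HH$ — i.e., the hypothesis of Theorem \ref{convthm} that a minimizer for $\bar\nu$ exists is satisfied, with $\bar\phi = \phi_\infty$.

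Finally I would apply Theorem \ref{convthm} to the KEMM with initial condition $\bar\phi_0 := \phi_0 \in \bar\HH$: it exists for all time (which we already knew, being the smooth flow) and converges weakly, in the distance topology on $\bar\HH$, to a minimizer of $\bar\nu$. Combining with the identification $\phi_t = \til\phi_t$ from the first step gives that $\phi_t$ itself converges weakly to a minimizer of $\bar\nu$, which is the claim.

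The only genuine point requiring care — and the place I expect a reader might want a line of justification — is the passage from "$\phi_\infty$ minimizes $\nu$ on $\HH$" to "$\phi_\infty$ minimizes $\bar\nu$ on $\bar\HH$"; this follows from density of $\HH$ in $\bar\HH$ together with the lower semicontinuity of $\bar\nu$ with respect to the distance topology, which is part of the standard setup (the same lower semicontinuity underlies the existence theory for KEMM in \cite{SMM}). Everything else is a formal concatenation, so the corollary is immediate once this compatibility is noted.
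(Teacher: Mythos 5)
Your proof is correct and follows essentially the same route as the paper: Chen--Tian identifies $\phi_\infty$ as a minimizer of $\nu$ on $\HH$, the definition of $\bar{\nu}$ promotes it to a minimizer of $\bar{\nu}$ on $\bar{\HH}$, Theorem \ref{convthm} yields weak convergence of the KEMM, and (as the introduction notes) Theorem \ref{realizationthm} identifies that KEMM with the smooth flow. One small caution on the step you flag yourself: the passage from ``$\nu$-minimizer on $\HH$'' to ``$\bar{\nu}$-minimizer on $\bar{\HH}$'' should be justified directly from the $\liminf$ formula in Definition \ref{lscextdefn}, which immediately gives $\bar{\nu}(x) \geq \inf_{\HH}\nu$ for every $x \in \bar{\HH}$; appealing to density plus abstract lower semicontinuity of $\bar{\nu}$, as you phrase it, does not by itself preclude $\bar{\nu}$ taking a value below $\inf_{\HH}\nu$ at a point of $\bar{\HH}\setminus\HH$, since lower semicontinuity bounds $\liminf_n \bar{\nu}(x_n)$ below by $\bar{\nu}(x)$, which is the wrong direction for what you need.
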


\begin{rmk} Corollary \ref{convcor} represents an affirmative qualitative answer
to a question implicit in \cite{DonaldsonConj}.  In particular, in
\cite{DonaldsonConj} Donaldson proposes four possibilities for the convergence
behavior of long time solutions of Calabi flow.  The simplest case is that of
convergence to a constant scalar curvature metric, assuming one exists. 
Corollary \ref{convcor} schematically takes this form, 
but the statement hides a
subtlety that prevents it from being a complete answer to this conjecture.  In
particular, while it follows from the work of Chen-Tian \cite{ChenTian} that
constant scalar curvature metrics represent the only minima of $K$-energy, it
does not immediately follow that the minimizers for $\bar{\nu}$ are all in
$\HH$, and so are minimizers for $\nu$.  Given the convexity properties of $\nu$
though this seems likely to be true.  What is missing is a kind of ``effective
uniqueness'' statement which says that any sequence of points in $\HH$ realizing
the infimum of $K$-energy converges in the weak distance topology to a constant
scalar curvature metric.  A result of this kind together with the corresponding
convergence statement for Calabi flow is established for the (anti)canonical
K\"ahler class on a K\"ahler-Einstein manifold by Berman \cite{Berman}.
\end{rmk}

Here is an outline of the rest of the paper.  In \S \ref{MMreview} we review the
construction of $K$-energy minimizing movements and discuss some of their
properties.  Then in \S \ref{realthmpf} we give the proof of Theorem
\ref{realizationthm} and the related corollaries.  We end in \S \ref{convsec}
with the proof of Theorem
\ref{convthm} and Corollary \ref{convcor}.

\vskip 0.2in

\textbf{Acknowledgments:} The author would like to thank Miroslav Ba\u{c}\'ak
for
informing the author of his work, and also Robert Berman and Weiyong He for
interesting
discussions on Calabi flow.

\section{Review of K-energy minimizing movements} \label{MMreview}

The method of minimizing movements consists of employing an implicit Euler
scheme to generate gradient lines of functionals on metric spaces.  A
foundational work in this direction is the paper of Mayer \cite{Mayer}, proving
a general existence result in the case of convex functionals
on metric spaces with nonpositive curvature in the sense of Alexandrov.  In
\cite{SMM} we showed that one can use this theorem to construct minimizing
movement solutions to Calabi flow which exist for all time.  In this section we
give a very brief review of this construction,
and then record some further aspects of the theory of minimizing movement
established in \cite{Mayer}.

\subsection{The space of K\"ahler metrics}

Let $(M^{2n}, \gw, J)$ be a compact K\"ahler manifold.  As in the introduction,
we denote the space of
K\"ahler metrics in $[\gw]$, thought of as a space of K\"ahler potentials, by
$\HH$:
\begin{align*}
 \HH = \{\phi \in C^{\infty}(M) | \gw + \sqrt{-1} \del\delb \phi > 0 \}.
\end{align*}
This space is an infinite dimensional manifold modeled locally on
$C^{\infty}(M)$.  In particular, formally one observes that for every $\phi \in
\HH$, $T_{\phi} \HH \cong C^{\infty}(M)$, and we can define a Riemannian metric,
called the Mabuchi-Semmes-Donaldson metric (\cite{Mabuchi}, \cite{Semmes},
\cite{Donaldson}) by
\begin{align*}
 \IP{\ga,\gb}_{\phi} := \int_M \ga\gb \gw_{\phi}^n.
\end{align*}
As shown in (\cite{Mabuchi}, \cite{Semmes}, \cite{Donaldson}), this metric has
formally nonpositive curvature.  One can adapt the usual definition of the
length of a curve to this situation, and it was shown in the work of Chen
\cite{ChenSOKM} that the space $\HH$ is convex by $C^{1,1}$ geodesics, and
moreover the distance function induced by the length functional does indeed
induce a metric space structure on $\HH$, which we denote by $d$.  Given this
setup, we let $(\bar{\HH}, d)$ denote the metric space completion of $(\HH, d)$.

\subsection{Long time existence of K-energy minimizing movements}

Recall that the $K$-energy functional can be defined on $\HH$ by
\begin{align*}
\nu(\phi) =&\ - \int_0^1 \int_M \left( s(\gw_{\phi}) - \bar{s} \right)
\dot{\phi} \gw_{\phi}^n dt.
\end{align*}
Where $\phi_t : [0,1] \to \HH$ is any smooth map such that $\phi_0 = 0$ and
$\phi_1 = \phi$.  We aim to construct minimizing movements of $\nu$ in
$\bar{\HH}$, so first we must extend the domain of $\nu$ to $\bar{\HH}$.

\begin{defn} \label{lscextdefn} Let $(X, d)$ be a metric space and $f : X \to
\mathbb R$ a lower
semicontinuous function.  If $(\bar{X}, d)$ denotes the completion of $(X,
d)$, we define the \emph{lower semicontinuous extension of $f$} by
\begin{align} \label{lscext}
\bar{f}(x) := \begin{cases}
f(x) & x \in X\\
\displaystyle \liminf_{x_n \to x} f(x_n) & x \in \bar{X} \backslash X.
\end{cases}
\end{align}
One can easily show that $\bar{f}$ is
indeed lower semicontinuous (cf. \cite{SMM} Lemma 5.10).
\end{defn}

\begin{defn} Let $(M^{2n}, \gw, J)$ be a compact K\"ahler manifold.  Observe
that by Theorem \ref{Kenergydecay} the function $\nu$ is lower semicontinuous on
$\HH$.  Thus we set
\begin{align*}
\bar{\nu} : \bar{\HH} \to \mathbb R
\end{align*}
to be the lower semicontinuous extension of $\nu : \HH \to \mathbb R$ in the
sense of Definition \ref{lscextdefn}.
\end{defn}

\begin{defn} \label{discdef} Let $(M^{2n}, \gw, J)$ be a compact K\"ahler
manifold.  Fix ${\phi}
\in \bar{\HH}$ and $\tau > 0$.  Let
\begin{align} \label{Fdef}
\mathcal F_{{\phi}, \tau} ({\psi}) =&\ \frac{{d}^2({\phi}, {\psi})}{2 \tau}
+
\bar{\nu}({\psi}).
\end{align}
Furthermore, set
\begin{align} \label{mudef}
\mu_{\phi,\tau} := \inf_{\psi \in \HH} \FF_{\phi,\tau}(\psi).
\end{align}
The quantity $\mu$ is sometimes referred to as a \emph{Moreau-Yosida
approximation} of the given functional, in this case $\nu$.  Finally, we define
the \emph{resolvent operator}
\begin{align*}
W_{\tau} : \bar{\HH} \to \bar{\HH}
\end{align*}
by the property
\begin{align*}
\FF_{\phi,\tau}(W_{\tau}(\phi)) = \mu_{\phi,\tau}.
\end{align*}
The fact that there exists a unique minimizer for $\FF_{\phi,\tau}$ and so the
map $W_{\tau}$ is shown as part of the proof of (\cite{Mayer} Theorem 1.13).
\end{defn}

\noindent One interprets the resolvent operator formally as $W_{\tau} = (I +
\tau \N \bar{\nu})^{-1}$, and as such one expects that the formal gradient line
for $\bar{\nu}$ with initial condition $\phi$ can be constructed by $\phi_t =
\displaystyle \lim_{n \to \infty} W_{\frac{t}{n}}^n(\phi_0)$.  This formal
picture was
justified in the case of convex functions on metric spaces of nonpositive
curvature in the work of Mayer (\cite{Mayer} Theorem 1.13).  In \cite{SMM} we
applied this theorem to assert the long time existence of formal gradient lines
of $\bar{\nu}$ with arbitrary initial data.

\begin{thm} \label{MMex} (\cite{SMM} Theorem 1.3) Given $\phi_0 \in \bar{\HH}$,
there
exists a continuous map
\begin{align*}
 \phi_t : [0,\infty) \to \bar{\HH}
\end{align*}
such that for all $t > 0$ one has
\begin{align*}
 \phi_t = \lim_{n \to \infty} W_{\frac{t}{n}}^n(\phi_0),
\end{align*}
and moreover
\begin{align*}
 \lim_{t \to 0} \phi_t = \phi_0.
\end{align*}
\end{thm}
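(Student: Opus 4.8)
The plan is to deduce the statement directly from Mayer's general existence theorem for gradient flows of convex, lower semicontinuous functionals on complete metric spaces of nonpositive curvature (\cite{Mayer}, Theorem~1.13). All of the real content therefore lies in checking the hypotheses of that theorem for the pair $\bigl((\bar\HH, d),\, \bar\nu\bigr)$: that $(\bar\HH,d)$ is a complete geodesic space of nonpositive curvature in the sense of Alexandrov, and that $\bar\nu : \bar\HH \to (-\infty,\infty]$ is proper, lower semicontinuous, and convex along geodesics. Granting these, Mayer's theorem yields single-valued resolvents $W_\tau$, the convergence of the implicit Euler iterates $W_{t/n}^n(\phi_0)$ as $n\to\infty$ to a point $\phi_t$ for every $\phi_0 \in \overline{D(\bar\nu)}$, the continuity of $t\mapsto \phi_t$ on $[0,\infty)$ (and local Lipschitz continuity on $(0,\infty)$), and $\phi_t \to \phi_0$ as $t\to 0^+$; since $\HH \subseteq D(\bar\nu)$ is dense in $\bar\HH$ one has $\overline{D(\bar\nu)} = \bar\HH$, so these conclusions hold for arbitrary $\phi_0 \in \bar\HH$, which is exactly the assertion.

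First I would assemble the metric-geometric input. By Chen's theorem (\cite{ChenSOKM}), $(\HH,d)$ is a bona fide metric space in which any two potentials are joined by a $C^{1,1}$ geodesic of the Mabuchi--Semmes--Donaldson metric, and the formal nonpositivity of that metric's curvature becomes, along such geodesics, the CAT$(0)$ comparison inequality; hence $(\HH,d)$ is CAT$(0)$. Its metric completion $(\bar\HH,d)$ is complete by construction and again CAT$(0)$, since the completion of a CAT$(0)$ space is CAT$(0)$. In particular $\bar\HH$ is uniquely geodesic, $\HH$ is a dense geodesically convex subset, geodesics vary $1$-Lipschitz-continuously with their endpoints, and $\psi\mapsto \tfrac12 d^2(\phi,\psi)$ is strongly convex along geodesics. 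These facts are what make each Moreau--Yosida functional $\FF_{\phi,\tau}$ strictly convex, coercive, and lower semicontinuous, hence uniquely minimized --- this is the statement underlying Definition~\ref{discdef} that $W_\tau$ is well-defined.

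Next I would verify the functional hypotheses on $\bar\nu$. Properness is immediate since $\bar\nu|_\HH = \nu$ is finite, so $\bar\nu\not\equiv\infty$; moreover convexity together with the (local Lipschitz) continuity of finite convex functions on CAT$(0)$ spaces shows $\nu$, hence $\bar\nu$, is bounded below on bounded sets, so $\bar\nu$ never equals $-\infty$. Lower semicontinuity is built in: $\bar\nu$ is by definition the lower semicontinuous extension (Definition~\ref{lscextdefn}) of the functional $\nu$, which is lower semicontinuous on $\HH$ by Theorem~\ref{Kenergydecay}. The crucial and least formal point is geodesic convexity. One starts from the convexity of $\nu$ along the $C^{1,1}$ geodesics of $\HH$ (\cite{ChenTian}) and upgrades it to convexity of $\bar\nu$ along geodesics of $\bar\HH$: given a geodesic $t\mapsto\gamma_t$ in $\bar\HH$, approximate its endpoints by sequences in $\HH$, use continuity of geodesics in the endpoints to obtain $\HH$-geodesics $\gamma^k$ converging pointwise to $\gamma$, apply convexity of $\nu$ along each $\gamma^k$, and pass to the limit using lower semicontinuity of $\bar\nu$ at the interior points together with the chordal upper bound at the endpoints. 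This transfer argument is carried out in \cite{SMM}.

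With every hypothesis in place, \cite{Mayer}, Theorem~1.13 applies verbatim: the discrete resolvent identity and Crandall--Liggett-type contraction estimates force $n\mapsto W_{t/n}^n(\phi_0)$ to be Cauchy, the limit $\phi_t$ defines the gradient-flow semigroup of $\bar\nu$, the regularizing effect gives local Lipschitz continuity on $(0,\infty)$, and $\phi_t\to\phi_0$ as $t\to 0$. The main obstacle is not the application of Mayer's theorem but the metric-geometric preparation feeding into it --- in particular certifying that the completion $\bar\HH$ is genuinely CAT$(0)$ (which rests on Chen's $C^{1,1}$ geodesic regularity) and transferring the geodesic convexity of $\nu$ from $\HH$ to $\bar\nu$ on $\bar\HH$, where the relevant geodesics need not remain in $\HH$.
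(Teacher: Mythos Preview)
Your proposal is correct and matches the paper's approach exactly: the paper does not prove this theorem in-text but cites it from \cite{SMM}, explaining that it follows by applying Mayer's general existence theorem (\cite{Mayer} Theorem~1.13) once one has verified that $(\bar\HH,d)$ is a complete NPC space and that $\bar\nu$ is a lower semicontinuous convex functional on it. Your outline of those verifications --- completion preserving CAT$(0)$, lower semicontinuity built into Definition~\ref{lscextdefn}, and the approximation argument passing convexity of $\nu$ on $\HH$ to $\bar\nu$ on $\bar\HH$ --- is precisely the content of \cite{SMM} \S5 (see in particular \cite{SMM} Lemma~5.9 for the NPC property and the surrounding discussion for convexity), so there is nothing to add.
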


\subsection{Further aspects of Mayer's Theorem}

In this subsection we record a number of further properties of the minimizing
movements constructed in Mayer's Theorem.  These generally speaking are
generalizations of certain properties obviously satisfied for smooth gradient
flows. 
While the statements we reference in \cite{Mayer} apply to the general setup of
that paper, we have specialized the statements to our situation for convenience.
 To begin we record a definition of the lower slope of $\bar{\nu}$ at a point.

\begin{defn} Given $\phi_0 \in \bar{\HH}$ satisfying $\bar{\nu} ({\phi_0}) <
\infty$, let
 \begin{align*}
  \brs{\N_- \bar{\nu}}(\phi_0) = \max \left\{ \limsup_{\phi \to \phi_0}
\frac{\bar{\nu}(\phi_0) - \bar{\nu}(\phi)}{d(\phi_0, \phi)}, 0 \right\}
 \end{align*}
\end{defn}

One should think of this quantity roughly speaking as the norm of the gradient
of $\bar{\nu}$, which corresponds to the speed of a minimizing movement.  In
particular, we have

\begin{thm} \label{mayer2} (\cite{Mayer} Theorem 2.17) Let $\phi_t$ be a KEMM. 
Then for all $t \geq 0$ one has
 \begin{align*}
  \lim_{s \to 0^+} \frac{d(\phi_{t+s}, \phi_t)}{s} = \brs{\N_- \bar{\nu}}
(\phi_t).
 \end{align*}
\end{thm}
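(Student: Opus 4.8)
The plan is to reconstruct the proof of Mayer, which runs entirely through the resolvent operator $W_\tau$ and the Moreau--Yosida function $\mu_{\phi,\tau}$ of Definition \ref{discdef}, and which uses in an essential way both the geodesic convexity of $\bar\nu$ and the nonpositive curvature of $(\bar\HH,d)$.  One analyzes first a single resolvent step, then the Euler iterates $W_{t/n}^{n}(\phi_0)$, and finally passes to the limit to obtain an energy dissipation identity for the KEMM $\phi_t$; the claimed identity of metric speed and descending slope then follows from regularity of $\phi_t$ in $t$.

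\textbf{Step 1: the single step.}  For fixed $\phi$ with $\bar\nu(\phi)<\infty$ one shows, using only minimality of $W_\tau(\phi)$ for $\FF_{\phi,\tau}$, that $\tau\mapsto d(\phi,W_\tau(\phi))$ is nondecreasing, that $\tau\mapsto\mu_{\phi,\tau}$ is nonincreasing and locally Lipschitz with $-\tfrac{d}{d\tau}\mu_{\phi,\tau}=\tfrac{d^2(\phi,W_\tau(\phi))}{2\tau^2}$ for a.e.\ $\tau$, and --- by testing minimality against points on the geodesic from $\phi$ to $W_\tau(\phi)$ and invoking convexity of $\bar\nu$ together with the CAT$(0)$ inequality --- that $\tau\mapsto\tfrac{d(\phi,W_\tau(\phi))}{\tau}$ is nonincreasing.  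Hence $\lim_{\tau\to0^+}\tfrac{d(\phi,W_\tau(\phi))}{\tau}$ exists, and a further comparison argument identifies it with $\brs{\N_-\bar\nu}(\phi)$.  This identification is the one place geodesic convexity is indispensable; for a general lower semicontinuous functional one obtains only ``$\le$''.

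\textbf{Step 2: from the Euler scheme to the flow.}  Telescoping the single-step inequality $\tfrac{d^2(\phi,W_\tau(\phi))}{2\tau}\le\bar\nu(\phi)-\bar\nu(W_\tau(\phi))$ along $\phi^n_k:=W_{t/n}^{k}(\phi_0)$ gives a uniform bound on $\sum_k\tfrac{d^2(\phi^n_{k-1},\phi^n_k)}{2(t/n)}$, i.e.\ a uniform bound on the $L^2$ norm of the metric speed of the piecewise-geodesic interpolants, which by Theorem \ref{MMex} converge uniformly to $\phi_t$.  Lower semicontinuity of $\bar\nu$, lower semicontinuity of the $L^2$ metric speed under uniform convergence, the De Giorgi variational interpolation (to recover the slope term), and Step 1 then yield the energy dissipation identity
\begin{align*}
\bar\nu(\phi_a)-\bar\nu(\phi_b) = \tfrac12\int_a^b\brs{\dot\phi_r}^2\,dr + \tfrac12\int_a^b\brs{\N_-\bar\nu}^2(\phi_r)\,dr,\qquad 0\le a\le b,
\end{align*}
along with $\brs{\dot\phi_r}=\brs{\N_-\bar\nu}(\phi_r)$ for a.e.\ $r$, local Lipschitz continuity of $t\mapsto\phi_t$ on $(0,\infty)$, and local absolute continuity and monotonicity of $t\mapsto\bar\nu(\phi_t)$; the reverse inequality needed for equality uses that $\brs{\N_-\bar\nu}$ is an upper gradient of $\bar\nu$ along Lipschitz curves together with Young's inequality.

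\textbf{Step 3: upgrading to every $t$.}  To promote the almost-everywhere identity to the pointwise statement claimed, one first notes that $t\mapsto\brs{\N_-\bar\nu}(\phi_t)$ is lower semicontinuous (the descending slope of a geodesically convex lower semicontinuous functional is lower semicontinuous, and $\phi_t$ is continuous), and then shows it is nonincreasing along the flow --- again a consequence of convexity of $\bar\nu$ and the NPC hypothesis via the evolution variational inequality $\tfrac12\tfrac{d}{ds}d^2(\phi_{t+s},\psi)+\bar\nu(\phi_{t+s})\le\bar\nu(\psi)$ satisfied by the KEMM (itself a consequence of the minimality defining $W_\tau$).  Taking $\psi=\phi_t$ and integrating the evolution variational inequality gives $d(\phi_{t+s},\phi_t)\le s\sup_{r\in[t,t+s]}\brs{\N_-\bar\nu}(\phi_r)$, hence $\limsup_{s\to0^+}\tfrac{d(\phi_{t+s},\phi_t)}{s}\le\brs{\N_-\bar\nu}(\phi_t)$; the matching lower bound follows by comparing $\phi_{t+s}$ with the single resolvent step $W_s(\phi_t)$, whose distance to $\phi_t$ has the correct asymptotics by Step 1, using the Crandall--Liggett type error estimate for $W_{s/n}^{n}(\phi_t)\to\phi_{t+s}$.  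The case $t=0$ is handled using $\lim_{t\to0}\phi_t=\phi_0$ from Theorem \ref{MMex} and lower semicontinuity of the slope.  The main obstacle is Step 3: obtaining the monotonicity of the descending slope along the flow --- precisely the point at which the nonpositive curvature of $(\bar\HH,d)$, rather than mere metric completeness, enters --- and upgrading the a.e.\ identity of Step 2 into a genuine one-sided derivative valid for all $t\ge0$.
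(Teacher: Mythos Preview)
The paper does not give its own proof of this statement: Theorem \ref{mayer2} is simply quoted from \cite{Mayer} (Theorem 2.17) and used as a black box, with the surrounding lemmas (Lemma \ref{mayer3}, Lemma \ref{mayer4}, Theorem \ref{uniquedirection}) likewise imported by citation.  So there is nothing in the paper against which to compare your argument line by line.

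That said, your outline is a faithful high-level reconstruction of Mayer's actual proof.  The three stages you describe --- the analysis of a single resolvent step and the identification $\lim_{\tau\to0^+}\tfrac{d(\phi,W_\tau(\phi))}{\tau}=\brs{\N_-\bar\nu}(\phi)$; the passage from the Euler iterates to an energy dissipation identity; and the upgrade from an a.e.\ identity to a pointwise one via monotonicity of the slope along the flow --- are precisely the architecture of \cite{Mayer}, \S2.  A couple of points are compressed to the point of needing care: in Step 3, to pass from $d(\phi_{t+s},\phi_t)\le s\sup_{r\in[t,t+s]}\brs{\N_-\bar\nu}(\phi_r)$ to the bound by $\brs{\N_-\bar\nu}(\phi_t)$ you are implicitly using right-continuity of $r\mapsto\brs{\N_-\bar\nu}(\phi_r)$, which does follow from monotonicity together with lower semicontinuity but should be stated; and your treatment of $t=0$ (``lower semicontinuity of the slope'') is not quite enough on its own, since lower semicontinuity gives the inequality in the wrong direction for the \emph{upper} bound --- one still needs the resolvent comparison of Step 1 at $\phi_0$.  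These are minor gaps in exposition rather than genuine errors; the strategy is correct and matches the cited source.
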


Part of the proof of this theorem is a lemma we require asserting the
intuitively clear statement that if the infinitesimal variation of distance
along a minimizing movement is zero at some point, then the flow is constant
from that point on.  

\begin{lemma} \label{mayer3} (\cite{Mayer} Lemma 2.15) Let $\phi_t$ be a KEMM. 
If
\begin{align*}
\lim_{s \to 0^+} \frac{d(\phi_{t_0 + s}, \phi_{t_0})}{s} = 0,
\end{align*}
then $\phi_{t} = \phi_{t_0}$ for all $t \geq t_0$.
\end{lemma}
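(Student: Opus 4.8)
The plan is to exploit the minimizing-movement characterization: $\phi_t = \lim_{n\to\infty} W_{t/n}^n(\phi_0)$ built from the resolvent $W_\tau$. The key monotonicity facts are that $\bar\nu(W_\tau(\psi)) \le \bar\nu(\psi)$ and, more importantly, a quantitative version from the minimizing property of $W_\tau(\psi)$: since $\FF_{\psi,\tau}(W_\tau(\psi)) \le \FF_{\psi,\tau}(\psi)$, one gets $\frac{d^2(\psi, W_\tau(\psi))}{2\tau} + \bar\nu(W_\tau(\psi)) \le \bar\nu(\psi)$, i.e. $\bar\nu$ decreases by at least $\frac{d^2(\psi, W_\tau(\psi))}{2\tau}$ along each resolvent step. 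Summing such inequalities along the discrete scheme gives, in the limit, a bound of the form $\frac{1}{2}\int_0^t \brs{\N_- \bar\nu}^2(\phi_s)\, ds \le \bar\nu(\phi_0) - \bar\nu(\phi_t)$ — this is the energy-dissipation inequality recorded (at least implicitly) in Mayer's theorem. Combined with Theorem \ref{mayer2}, which identifies $\lim_{s\to 0^+} d(\phi_{t+s},\phi_t)/s$ with $\brs{\N_-\bar\nu}(\phi_t)$, the hypothesis says this speed vanishes at $t_0$.

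First I would show that $\brs{\N_-\bar\nu}(\phi_{t_0}) = 0$ forces $\phi_{t_0}$ to be a minimizer of $\bar\nu$, or at least that $\bar\nu$ cannot strictly decrease after $t_0$. The cleaner route: the resolvent at a point $\psi$ where $\brs{\N_-\bar\nu}(\psi) = 0$ satisfies $W_\tau(\psi) = \psi$. Indeed, if $W_\tau(\psi) \ne \psi$, then from the minimizing inequality above $\bar\nu(\psi) - \bar\nu(W_\tau(\psi)) \ge \frac{d^2(\psi,W_\tau(\psi))}{2\tau} > 0$, and a standard comparison argument along the $C^{1,1}$ geodesic from $\psi$ to $W_\tau(\psi)$ (using convexity of $\bar\nu$ and of $d^2(\phi,\cdot)$, the defining features of the nonpositively curved setting) produces a point near $\psi$ along which the difference quotient $\frac{\bar\nu(\psi) - \bar\nu(\cdot)}{d(\psi,\cdot)}$ is bounded below by a positive constant like $\frac{d(\psi,W_\tau(\psi))}{2\tau}$, contradicting $\brs{\N_-\bar\nu}(\psi) = 0$. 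Hence $W_\tau(\phi_{t_0}) = \phi_{t_0}$ for every $\tau > 0$, so $W_{s/n}^n(\phi_{t_0}) = \phi_{t_0}$ for all $n$, and passing to the limit, the KEMM started at $\phi_{t_0}$ is constant. By the semigroup/uniqueness property of KEMM (implicit in Theorem \ref{MMex}), $\phi_{t_0 + t}$ is the KEMM with initial condition $\phi_{t_0}$, so $\phi_t = \phi_{t_0}$ for all $t \ge t_0$.

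The main obstacle is the geodesic-comparison step showing $\brs{\N_-\bar\nu}(\psi) = 0 \implies W_\tau(\psi) = \psi$: one must correctly invoke the convexity of $\bar\nu$ along $C^{1,1}$ geodesics (which holds on $\bar\HH$ by lower-semicontinuous extension from $\HH$, using Chen's geodesic convexity results) together with the strong convexity of $\tau\mapsto d^2(\gamma(t),\gamma_0)$ in the NPC setting, and check that the relevant points lie where these facts are available — this is exactly the kind of estimate internal to Mayer's proof of Lemma 2.15, so in practice I would cite it, but the honest self-contained argument runs as above. A secondary point to verify carefully is the semigroup property of the KEMM, i.e. that the tail $\{\phi_{t_0+t}\}_{t\ge 0}$ is itself the minimizing movement with data $\phi_{t_0}$; this follows from the convergence $\phi_t = \lim W_{t/n}^n(\phi_0)$ and uniqueness of the limit, but should be stated.
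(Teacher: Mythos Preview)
The paper does not prove this lemma; it is simply quoted from Mayer with the citation (\cite{Mayer} Lemma 2.15), so there is no in-paper argument to compare against. Your sketch is therefore more than the paper itself offers, and the core mechanism you propose is correct: if $W_\tau(\psi)\ne\psi$, then convexity of $\bar\nu$ along the geodesic $\gamma$ from $\psi$ to $W_\tau(\psi)$ gives
\[
\bar\nu(\psi)-\bar\nu(\gamma(t))\ \ge\ t\bigl(\bar\nu(\psi)-\bar\nu(W_\tau(\psi))\bigr)\ \ge\ t\,\frac{d^2(\psi,W_\tau(\psi))}{2\tau},
\]
while $d(\psi,\gamma(t))=t\,d(\psi,W_\tau(\psi))$, so the difference quotient is bounded below by $\frac{d(\psi,W_\tau(\psi))}{2\tau}>0$, contradicting $\brs{\N_-\bar\nu}(\psi)=0$. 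Then $W_\tau(\phi_{t_0})=\phi_{t_0}$ for all $\tau$, and the semigroup property finishes.

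One genuine issue with the organization: you invoke Theorem~\ref{mayer2} to convert the hypothesis $\lim_{s\to 0^+} d(\phi_{t_0+s},\phi_{t_0})/s=0$ into $\brs{\N_-\bar\nu}(\phi_{t_0})=0$. But the paper explicitly states that Lemma~\ref{mayer3} is ``part of the proof'' of Theorem~\ref{mayer2}, i.e.\ in Mayer's logical order this lemma precedes that theorem. So citing Theorem~\ref{mayer2} here is circular relative to Mayer's development. The fix is to establish the link between the metric-derivative hypothesis and the resolvent behavior directly (e.g.\ via estimates on $d(\phi_{t_0},W_\tau(\phi_{t_0}))/\tau$ coming from the discrete scheme), which is what Mayer does prior to assembling the full Theorem~2.17. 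Your convexity engine is the right one; only the entry point needs to be rerouted to avoid leaning on the theorem this lemma is meant to support.
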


We also later require the again intuitively clear point that the lower gradient
is itself nonincreasing upon taking resolvents due to convexity of $\nu$.

\begin{lemma} \label{mayer4} (\cite{Mayer} Lemma 2.23) Given $\phi \in
\bar{\HH}$ and $\tau > 0$,
\begin{align*}
\brs{\N_- \bar{\nu}}(W_{\tau}(\phi)) \leq&\ \brs{\N_- \bar{\nu}}(\phi).
\end{align*}
Moreover, if $\phi_t$ denotes a KEMM and $t \geq s \geq 0$, then
\begin{align*}
 \brs{\N_- \bar{\nu}}(\phi_t) \leq \brs{\N_-\bar{\nu}}(\phi_s).
\end{align*}
\end{lemma}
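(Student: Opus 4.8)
The plan is to deduce both assertions from the convexity of $\bar\nu$ together with the strong (in fact $\tfrac{1}{\tau}$-) convexity of $\FF_{\phi,\tau}$ on the NPC space $(\bar\HH, d)$. The first point to record is that for a geodesically convex, lower semicontinuous functional the lower slope coincides with the \emph{global} slope:
\begin{align} \label{globalslope}
\brs{\N_- \bar{\nu}}(\phi) = \sup_{\psi \neq \phi} \frac{\left( \bar\nu(\phi) - \bar\nu(\psi) \right)^+}{d(\phi, \psi)}.
\end{align}
The inequality $\leq$ is automatic; for $\geq$ one restricts to a minimizing geodesic $\gamma$ from $\phi$ to $\psi$, along which convexity forces the difference quotient $t \mapsto (\bar\nu(\phi) - \bar\nu(\gamma(t)))/d(\phi, \gamma(t))$ to be monotone, so that its value at $\psi$ is bounded by its limit as $t \to 0^+$, hence by $\brs{\N_-\bar\nu}(\phi)$. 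Formula \eqref{globalslope} is what allows one to compare slopes at points that are far apart, and it also exhibits the slope as a supremum of lower semicontinuous functions of $\phi$, hence itself lower semicontinuous.

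Next, fix $\phi \in \bar\HH$, $\tau > 0$, and put $\psi = W_\tau(\phi)$. Feeding a point on the geodesic from $\psi$ to an arbitrary $\eta$ into the minimality of $\psi$ and letting the parameter tend to $0$ yields the resolvent variational inequality
\begin{align*}
\bar\nu(\eta) + \frac{d^2(\phi, \eta)}{2\tau} \geq \bar\nu(\psi) + \frac{d^2(\phi, \psi)}{2\tau} + \frac{d^2(\psi, \eta)}{2\tau} \qquad \text{for all } \eta \in \bar\HH.
\end{align*}
Combined with $d^2(\phi, \eta) \leq d^2(\phi, \psi) + 2 d(\phi, \psi) d(\psi, \eta) + d^2(\psi, \eta)$ this gives $\bar\nu(\psi) - \bar\nu(\eta) \leq \tfrac{1}{\tau} d(\phi, \psi) d(\psi, \eta)$, whence \eqref{globalslope} yields $\brs{\N_- \bar{\nu}}(W_\tau(\phi)) \leq \tfrac{1}{\tau} d(\phi, W_\tau(\phi))$. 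On the other hand, testing the minimality of $\psi$ against points on the geodesic from $\phi$ to $\psi$ and using convexity of $\bar\nu$ along it produces the sharp one-sided bound $\bar\nu(\phi) - \bar\nu(\psi) \geq \tfrac{1}{\tau} d^2(\phi, \psi)$ (rather than the naive $\tfrac{1}{2\tau}$), while \eqref{globalslope} gives $\bar\nu(\phi) - \bar\nu(\psi) \leq \brs{\N_- \bar{\nu}}(\phi)\, d(\phi, \psi)$; together these force $\tfrac{1}{\tau} d(\phi, W_\tau(\phi)) \leq \brs{\N_- \bar{\nu}}(\phi)$. Chaining the two bounds proves the first assertion.

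For the ``moreover'' statement, recall that a KEMM is the semigroup generated by iterated resolvents, so that by Theorem \ref{MMex} together with the semigroup property of minimizing movements, $\phi_t = \lim_{n \to \infty} W_{(t-s)/n}^{n}(\phi_s)$ in $(\bar\HH, d)$ whenever $t \geq s \geq 0$. Applying the first assertion $n$ times along this discrete scheme gives $\brs{\N_- \bar{\nu}}(W_{(t-s)/n}^{n}(\phi_s)) \leq \brs{\N_- \bar{\nu}}(\phi_s)$ for every $n$, and lower semicontinuity of the slope (noted above) lets us pass to the limit to obtain $\brs{\N_- \bar{\nu}}(\phi_t) \leq \brs{\N_- \bar{\nu}}(\phi_s)$.

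I expect the main obstacle to be the constant in the first assertion: a careless use of minimality only yields $\brs{\N_- \bar{\nu}}(W_\tau(\phi)) \leq 2\,\brs{\N_- \bar{\nu}}(\phi)$, and one genuinely needs to exploit geodesic convexity of $\bar\nu$ in \emph{both} comparison steps to get the improved factor — which in turn relies on knowing that $\bar\nu$ is honestly convex along the (only $C^{1,1}$) geodesics of $\bar\HH$, not merely lower semicontinuous. A secondary technical point is justifying the variational inequality for $W_\tau$ at the level of the completion, where $\bar\nu$ may take the value $+\infty$; this is handled exactly as in the proof of (\cite{Mayer} Theorem 1.13).
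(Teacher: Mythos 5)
The paper does not reproduce a proof of this lemma; it simply cites Mayer Lemma 2.23, so there is nothing in the paper's own text to compare against. Your argument is nevertheless correct and essentially follows the standard route in the minimizing movements literature (and, in substance, the route Mayer himself takes). The key steps all check out: (i) the identity $\brs{\N_-\bar\nu}(\phi) = \sup_{\psi\neq\phi}(\bar\nu(\phi)-\bar\nu(\psi))^+/d(\phi,\psi)$ is correct for convex l.s.c.\ functionals and gives lower semicontinuity of the slope as a supremum of l.s.c.\ functions of $\phi$; (ii) the resolvent variational inequality $\FF_{\phi,\tau}(\eta) \geq \FF_{\phi,\tau}(W_\tau\phi) + \tfrac{1}{2\tau}d^2(W_\tau\phi,\eta)$ follows from minimality plus the NPC quadrilateral inequality plus convexity of $\bar\nu$; (iii) specializing $\eta = \phi$ gives the improved bound $\bar\nu(\phi) - \bar\nu(W_\tau\phi) \geq \tfrac{1}{\tau}d^2(\phi,W_\tau\phi)$ without any additional geodesic argument (your phrasing suggests a separate convexity step, but plugging $\eta=\phi$ into the variational inequality already does it — the $\tfrac{1}{2\tau}d^2(W_\tau\phi,\phi)$ correction term is exactly what promotes $\tfrac{1}{2\tau}$ to $\tfrac{1}{\tau}$); (iv) chaining $\brs{\N_-\bar\nu}(W_\tau\phi)\leq \tfrac{1}{\tau}d(\phi,W_\tau\phi)\leq\brs{\N_-\bar\nu}(\phi)$ is correct; and (v) the passage from the discrete resolvent scheme to the KEMM via lower semicontinuity of the slope and the semigroup property is sound, given that $\bar\nu$ is convex and l.s.c.\ on $(\bar\HH,d)$ as established in \cite{SMM}. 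The only caveat is that your ``moreover'' step quietly uses the semigroup identity $\phi_t = \lim_n W_{(t-s)/n}^n(\phi_s)$, which is not literally the statement of Theorem \ref{MMex} (that theorem starts from $\phi_0$); you should cite the semigroup property from Mayer explicitly rather than attribute it to Theorem \ref{MMex}.
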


It is more
difficult to define the \emph{direction} of the minimizing movement.   The next
theorem asserts roughly speaking that there is a unique steepest
direction associated to minimizing movement solutions.

\begin{thm} \label{uniquedirection} (\cite{Mayer} Theorem 2.16) Let $\phi_t$ be
a KEMM.  Assume $0 < \brs{\N_- \bar{\nu}}(\phi_{t_0})$ and let $\phi^i \to
\phi_{t_0}$ be any sequence of points satisfying
\begin{align} \label{ud10}
 \lim_{i \to \infty} \frac{\bar{\nu}(\phi_{t_0}) -
\bar{\nu}(\phi^i)}{d(\phi_{t_0}, \phi^i)} = \brs{\N_- \bar{\nu}}(\phi_{t_0}).
\end{align}
Then there exists a sequence $s_i \to 0^+$ such that
\begin{align*}
 \lim_{i \to \infty} \frac{d(\phi^i, \phi_{t_0 + s_i})}{d(\phi^i, \phi_{t_0})} =
0.
\end{align*}
\end{thm}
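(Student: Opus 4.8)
The plan is to prove this by a short direct computation, exploiting the evolution variational inequality satisfied by the KEMM together with the identification of metric speed with lower slope (Theorem \ref{mayer2}) and the monotonicity of the lower slope (Lemma \ref{mayer4}).  Write $p := \phi_{t_0}$, $g := \brs{\N_- \bar{\nu}}(p)$ (so $g > 0$ by hypothesis), $v_i := d(p, \phi^i)$, and set $\ge_i := g - \frac{\bar{\nu}(p) - \bar{\nu}(\phi^i)}{v_i}$, so that $v_i \to 0$ (since $\phi^i \to p$) and $\ge_i \to 0$ (by the defining property \eqref{ud10} of the sequence).  I would show that $s_i := v_i / g$ does the job; note $s_i \to 0^+$, and this is the step where the hypothesis $g > 0$ is used.

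I would use two ingredients.  The first is the evolution variational inequality for the KEMM: for every $\psi \in \bar{\HH}$ and every $s \geq 0$,
\begin{align} \label{eq:eviud}
\tfrac{1}{2} d^2(\phi_{t_0 + s}, \psi) - \tfrac{1}{2} d^2(\phi_{t_0}, \psi) \leq \int_0^s \left( \bar{\nu}(\psi) - \bar{\nu}(\phi_{t_0 + r}) \right) dr .
\end{align}
This is the integrated, metric-space form of $\tfrac{d}{ds}\tfrac{1}{2} d^2(\phi_{t_0+s},\psi) \leq \bar{\nu}(\psi) - \bar{\nu}(\phi_{t_0+s})$ and belongs to Mayer's theory; it follows from the resolvent-level inequality $\tfrac{1}{2\tau}\left(d^2(W_\tau(\phi),\psi) - d^2(\phi,\psi)\right) \leq \bar{\nu}(\psi) - \bar{\nu}(W_\tau(\phi))$, which one obtains by inserting the geodesic from $W_\tau(\phi)$ to $\psi$ into $\FF_{\phi,\tau}$ and using nonpositive curvature of $(\bar{\HH},d)$ together with convexity of $\bar{\nu}$, then summing along the scheme $W_{t/n}^n$ and passing to the limit via Theorem \ref{MMex}.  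The second ingredient is the estimate
\begin{align} \label{eq:dropud}
\int_0^s \left( \bar{\nu}(p) - \bar{\nu}(\phi_{t_0 + r}) \right) dr \leq \tfrac{1}{2} g^2 s^2 .
\end{align}
Indeed, $d(p, \phi_{t_0+r}) \leq \int_0^r \brs{\N_- \bar{\nu}}(\phi_{t_0 + u}) \, du \leq g r$ by Theorems \ref{mayer2} and \ref{mayer4} (the integrand is the metric speed of the KEMM, bounded by its initial value $g$ thanks to Lemma \ref{mayer4}), while convexity of $\bar{\nu}$ forces every difference quotient of $\bar{\nu}$ based at $p$ to be at most $g$, giving $\bar{\nu}(p) - \bar{\nu}(\phi_{t_0+r}) \leq g\, d(p, \phi_{t_0 + r}) \leq g^2 r$; integrating in $r$ gives \eqref{eq:dropud}.

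Combining: applying \eqref{eq:eviud} with $\psi = \phi^i$, writing $\bar{\nu}(\phi^i) = \bar{\nu}(p) - (g - \ge_i) v_i$, and using \eqref{eq:dropud},
\begin{align*}
\tfrac{1}{2} d^2(\phi_{t_0 + s}, \phi^i)
&\leq \tfrac{1}{2} v_i^2 + s\left( \bar{\nu}(\phi^i) - \bar{\nu}(p) \right) + \int_0^s \left( \bar{\nu}(p) - \bar{\nu}(\phi_{t_0+r}) \right) dr\\
&\leq \tfrac{1}{2} v_i^2 - s(g - \ge_i) v_i + \tfrac{1}{2} g^2 s^2\\
&= \tfrac{1}{2}(v_i - g s)^2 + s \, \ge_i v_i .
\end{align*}
Taking $s = s_i = v_i / g$ kills the square and leaves $\tfrac{1}{2} d^2(\phi_{t_0 + s_i}, \phi^i) \leq \ge_i v_i^2 / g$, hence
\begin{align*}
\frac{d^2(\phi^i, \phi_{t_0 + s_i})}{d^2(\phi^i, \phi_{t_0})} = \frac{d^2(\phi^i, \phi_{t_0 + s_i})}{v_i^2} \leq \frac{2 \ge_i}{g} \longrightarrow 0 ,
\end{align*}
and taking square roots finishes the argument.

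The main obstacle will be \eqref{eq:eviud}: if one does not simply quote it from \cite{Mayer}, one must run the resolvent-level variational inequality and --- the more delicate point --- verify that it survives the limit $\phi_t = \lim_{n} W_{t/n}^n(\phi_0)$ in exactly the stated integrated form.  Past that, the only care required is with the absolutely-continuous-curve facts behind \eqref{eq:dropud} (the metric speed of the KEMM equals its lower slope a.e., and the distance between endpoints is at most the length, which is the integral of the speed) and with the geodesic convexity of $\bar{\nu}$ on $\bar{\HH}$, which is part of the framework set up in \cite{SMM}.  No genuinely new estimate is needed.
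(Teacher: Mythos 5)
Your argument is correct; note that the paper itself does not prove this statement but simply cites it from Mayer (Theorem 2.16), so there is no in-paper proof to compare against.

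The step you flag as the main obstacle is indeed essential, and it is worth emphasizing why. The paper's Lemma \ref{minimizinglemma} only yields the non-integrated form
\begin{align*}
\tfrac12 d^2(\phi_{t_0+s},\psi) - \tfrac12 d^2(\phi_{t_0},\psi) \leq s\bigl(\bar{\nu}(\psi)-\bar{\nu}(\phi_{t_0+s})\bigr),
\end{align*}
and this is \emph{not} sufficient for your argument: substituting it for the integrated EVI, writing $\bar{\nu}(\psi)-\bar{\nu}(\phi_{t_0+s}) \leq -(g-\ge_i)v_i + g^2 s$, and optimizing over $s$, the best you get is $d^2(\phi_{t_0+s},\phi^i) \leq \tfrac12 v_i^2 + o(v_i^2)$, so the ratio $d(\phi_{t_0+s},\phi^i)/v_i$ is only bounded by roughly $1/\sqrt{2}$, not $o(1)$. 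You genuinely need the factor $\tfrac12$ in $\tfrac12 g^2 s^2$ that comes from replacing $s\bigl(\bar{\nu}(\psi)-\bar{\nu}(\phi_{t_0+s})\bigr)$ by $\int_0^s\bigl(\bar{\nu}(\psi)-\bar{\nu}(\phi_{t_0+r})\bigr)dr$. Fortunately the integrated EVI does hold and follows from the discrete scheme as you outline: when passing to the limit one retains the Riemann sums $\tfrac{s}{n}\sum_j\bar{\nu}(W_{s/n}^j\phi_0)$ and uses Fatou together with lower semicontinuity of $\bar{\nu}$ (and a uniform lower bound from monotonicity) rather than dropping to the single term $\bar{\nu}(W_{s/n}^n\phi_0)$ as the paper does. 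Alternatively one can derive it directly from Lemma \ref{minimizinglemma}: $t\mapsto d^2(\phi_t,\psi)$ is locally Lipschitz, $\bar{\nu}\circ\phi$ is right-continuous (being nonincreasing along the KEMM and lower semicontinuous), so applying Lemma \ref{minimizinglemma} at base point $t$ with $s=h\to 0^+$ gives the pointwise EVI a.e., and integrating recovers the integrated form.

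One caveat: your choice $s_i = v_i/g$ does not yield the exact equality recorded in Remark \ref{uniquedirectionrmk}, namely $d(\phi_{t_0},\phi_{t_0+s_i}) = d(\phi_{t_0},\phi^i)$, which is used verbatim in the proof of Theorem \ref{realizationthm}. Theorem \ref{mayer2} only gives you $d(\phi_{t_0},\phi_{t_0+s_i}) = v_i(1+o(1))$ for your $s_i$. To satisfy the remark, one should instead take $s_i$ to be the first time at which $d(\phi_{t_0},\phi_{t_0+\cdot})$ equals $v_i$; since $g>0$ and the metric speed tends to $g$ at $s=0^+$ (Theorem \ref{mayer2}), this is well-defined for all large $i$, one still has $s_i = v_i/g + o(v_i)$, and your estimate carries over with a harmless extra $o(v_i^2)$ term. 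This is cosmetic as far as the statement of Theorem \ref{uniquedirection} goes, but it matters for how the theorem is invoked later in the paper.
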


\begin{rmk} \label{uniquedirectionrmk} In the sequel we will use the fact that
the values $s_i$ above are chosen so that
\begin{align*}
 d(\phi_{t_0}, \phi_{t_0 + s_i}) = d(\phi_{t_0}, \phi^i).
\end{align*}
\end{rmk}

Furthermore, it was shown by Calabi-Chen \cite{CalabiChen} that the distance
between pairs of points is nonincreasing under Calabi flow.  This fact can be
generalized to $K$-energy minimizing movements.  Note that we have cited
\cite{SMM}, although in some sense as a corollary to Theorem \ref{MMex} it is
implicit in \cite{Mayer}.

\begin{thm} (\cite{SMM} Theorem 1.4)\label{distnonincr} Let $(M^{2n}, \gw,
J)$ be a compact K\"ahler manifold.  If $\phi_t, \psi_t$ are $K$-energy
minimizing movements, then for
all $t \geq 0$ one has
\begin{align*}
 d(\phi_t, \psi_t) \leq d(\phi_0, \psi_0).
\end{align*}
\end{thm}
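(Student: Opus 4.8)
The plan is to reduce the statement to the nonexpansiveness of the resolvent operator $W_\tau$ and then pass through the variational scheme of Theorem \ref{MMex}. Precisely, I would (i) show that $W_\tau : \bar{\HH} \to \bar{\HH}$ satisfies $d(W_\tau(\phi), W_\tau(\psi)) \leq d(\phi,\psi)$ for all $\phi, \psi \in \bar{\HH}$ and $\tau > 0$; (ii) conclude that the iterate $W_{t/n}^n$, being a composition of nonexpansive maps, is itself nonexpansive, so that $d(W_{t/n}^n(\phi_0), W_{t/n}^n(\psi_0)) \leq d(\phi_0, \psi_0)$ for every $n$; and (iii) let $n \to \infty$, using Theorem \ref{MMex} to identify $\lim_n W_{t/n}^n(\phi_0) = \phi_t$ and $\lim_n W_{t/n}^n(\psi_0) = \psi_t$ in $(\bar{\HH}, d)$ and the continuity of $d$, to obtain $d(\phi_t, \psi_t) = \lim_n d(W_{t/n}^n(\phi_0), W_{t/n}^n(\psi_0)) \leq d(\phi_0, \psi_0)$. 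The case $t = 0$ is immediate.

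The content is step (i), which is the classical nonexpansiveness of resolvents of geodesically convex functionals on nonpositively curved spaces (cf. \cite{Mayer}), and I would carry it out as follows. Set $\phi' = W_\tau(\phi)$, $\psi' = W_\tau(\psi)$, which are the global minimizers of $\FF_{\phi,\tau}$ and $\FF_{\psi,\tau}$ over $\bar{\HH}$, and let $\sigma : [0,1] \to \bar{\HH}$ be the geodesic from $\phi'$ to $\psi'$ in the complete $\mathrm{CAT}(0)$ space $(\bar{\HH}, d)$, whose nonpositive curvature is the structural input coming from Chen's $C^{1,1}$ geodesics \cite{ChenSOKM,CalabiChen}. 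Testing $\FF_{\phi,\tau}(\sigma(t)) \geq \FF_{\phi,\tau}(\phi')$ and differentiating at $t = 0^+$ --- using geodesic convexity of $\bar{\nu}$ (a standing property here, inherited from convexity of the $K$-energy along $C^{1,1}$ geodesics) for the $\bar{\nu}$-term, and the Bruhat--Tits comparison inequality $d^2(\phi, \sigma(t)) \leq (1-t) d^2(\phi,\phi') + t\, d^2(\phi,\psi') - t(1-t) d^2(\phi',\psi')$ for the distance term --- yields one first-order inequality; running the same argument for $\psi'$ along the reversed geodesic yields a second; adding them gives
\begin{align*}
d^2(\phi, \psi') + d^2(\psi, \phi') \geq d^2(\phi,\phi') + d^2(\psi,\psi') + 2\, d^2(\phi',\psi').
\end{align*}
The combination $d^2(\phi,\psi') + d^2(\psi,\phi') - d^2(\phi,\phi') - d^2(\psi,\psi')$ is (twice) the generalized inner product of the geodesic chords $[\phi',\psi']$ and $[\phi,\psi]$, and in a $\mathrm{CAT}(0)$ space it is bounded above by $2\, d(\phi',\psi')\, d(\phi,\psi)$ by the associated Cauchy--Schwarz inequality (it reduces to the parallelogram identity in the Euclidean model); combined with the displayed inequality this forces $d^2(\phi',\psi') \leq d(\phi',\psi')\, d(\phi,\psi)$, hence $d(\phi',\psi') \leq d(\phi,\psi)$.

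I expect no serious obstacle here beyond two points of care, both of which belong to the standing setup in which Mayer's theorem was applied in \cite{SMM}: justifying the one-sided derivatives in the first-order minimality argument along the merely $C^{1,1}$ geodesic $\sigma$ (the squared distance to a fixed point is semiconvex along such geodesics and $\bar{\nu}$ is convex along them, so all the relevant right derivatives exist and obey the estimates used), and checking that $\bar{\nu}$ --- the lower semicontinuous extension of the $K$-energy, lower semicontinuous by Theorem \ref{Kenergydecay} --- is geodesically convex on all of $\bar{\HH}$ and not merely on $\HH$. Given these, steps (ii) and (iii) are routine. An alternative route, heavier but sidestepping the $\mathrm{CAT}(0)$ Cauchy--Schwarz inequality, is to first establish the evolution variational inequality for each $K$-energy minimizing movement by passing the discrete variational inequality for $W_\tau$ through the scheme, and then apply the standard EVI-contraction argument yielding $\tfrac{d}{dt} d^2(\phi_t, \psi_t) \leq 0$; that argument needs a chain rule for $t \mapsto d^2(\phi_t, \psi_t)$ along two locally Lipschitz curves, which is its one delicate ingredient.
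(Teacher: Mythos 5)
The paper contains no proof of this theorem: it is stated as a citation to \cite{SMM} Theorem~1.4, with the surrounding remark that ``in some sense as a corollary to Theorem~\ref{MMex} it is implicit in \cite{Mayer}.'' Your proposal reconstructs precisely the argument that remark alludes to --- nonexpansiveness of the resolvent $W_\tau$, hence of the iterates $W_{t/n}^n$, and passage to the limit via Theorem~\ref{MMex} together with continuity of $d$ --- so it is consistent with the route the paper has in mind rather than a genuinely different one, and it is correct.

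Two small remarks on your step~(i). The chain of inequalities is right: substituting the Bruhat--Tits comparison for $d^2(\phi,\sigma(t))$ and the geodesic convexity of $\bar{\nu}$ into the raw minimality inequality $\FF_{\phi,\tau}(\sigma(t)) \geq \FF_{\phi,\tau}(\phi')$, dividing by $t$, and letting $t \to 0^+$ gives the first-order inequality at $\phi'$; doing the same at $\psi'$ and adding kills the $\bar{\nu}$-terms and yields
\begin{align*}
d^2(\phi,\psi') + d^2(\psi,\phi') - d^2(\phi,\phi') - d^2(\psi,\psi') \geq 2\, d^2(\phi',\psi'),
\end{align*}
after which the quasilinearization (Berg--Nikolaev) Cauchy--Schwarz inequality in $\mathrm{CAT}(0)$ spaces forces $d(\phi',\psi') \leq d(\phi,\psi)$. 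Note you do not actually need to address one-sided differentiability of $t \mapsto \bar{\nu}(\sigma(t))$ or of the squared distance: the substitute-divide-limit maneuver uses only the upper bounds, so the paragraph of caveats you flag about semiconvexity and existence of right derivatives can be dropped. Finally, the standing hypotheses you lean on --- that $(\bar{\HH}, d)$ is a complete $\mathrm{CAT}(0)$ space and that $\bar{\nu}$ is geodesically convex and lower semicontinuous on all of $\bar{\HH}$ --- are exactly the inputs the paper itself credits to \cite{SMM}~\S5 in the proof of Theorem~\ref{convthm}, so invoking them is appropriate here.
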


Lastly, for the proof of Corollary \ref{minimizingcor2} we require a technical
lemma of Mayer whose proof we reproduce in our simpler case here.

\begin{lemma} \label{minimizinglemma} (\cite{Mayer} Lemma 2.8) Given $\phi_t \in
\bar{\HH}$ a KEMM and
$\psi \in \bar{\HH}$, then for any $s,t \geq 0$ one has
\begin{align*}
 d^2(\phi_{t+s}, \psi) \leq d^2(\phi_t, \psi) - 2s \left( \bar{\nu}(\phi_{t+s})
-
\bar{\nu}(\psi) \right).
\end{align*}
\begin{proof} This is the statement of \cite{Mayer} Lemma 2.8 in the special
case $S = 0$.  By the semigroup properties it suffices to show the statement for
$t = 0$.  Recall that $W_{\tau}$ denotes the resolvent operator.  Let
$\eta_{\gl} : [0,1] \to \bar{\HH}$ denote the unique geodesic connecting
$W_{\tau}(\phi_0)$ to $\psi$.
\begin{align*}
 \FF_{\phi_0, \tau}(W_{\tau}(\phi_0)) =&\ \frac{d^2(W_{\tau}(\phi_0), \phi_0)}{2
\tau} + \bar{\nu}(W_{\tau}(\phi_0))\\
 \leq&\ \frac{d^2(\eta_{\gl},\phi_0)}{2\tau} + \bar{\nu}(\eta_{\gl})\\
 \leq&\ (1 - \gl) \bar{\nu}(W_{\tau}(\phi_0)) + \gl \bar{\nu}(\psi)\\
 &\ + \frac{1}{2\tau} \left( (1-\gl) d^2(W_{\tau}(\phi_0), \phi_0) + \gl
d^2(\phi_0, \psi) - \gl(1-\gl) d^2(W_{\tau}(\phi_0), \psi) \right)\\
=&\ \FF_{\phi_0,\tau}(W_{\tau}(\phi_0)) + \gl \left( \bar{\nu}(\psi) -
\bar{\nu}(W_{\tau}(\phi_0)) + \frac{d^2(\phi_0, \psi)}{2 \tau} -
\frac{d^2(W_{\tau}(\phi_0),\phi_0)}{2\tau} \right)\\
&\ -\gl(1-\gl) \frac{d^2(W_{\tau}(\phi_0),\psi)}{2\tau}.
 \end{align*}
By subtracting $\FF_{\phi_0,\tau}(W_{\tau}(\phi_0))$ from both sides and diving
by $\gl$ we obtain
\begin{align*}
 d^2(W_{\tau}(\phi_0), \psi) \leq d^2(\phi_0, \psi) - 2 \tau \left(
\bar{\nu}(W_{\tau}(\phi_0)) - \bar{\nu}(\psi) \right).
\end{align*}
Now iterate this inequality $n$ times with $\tau = \frac{s}{n}$.  Since
$\bar{\nu}(W_{\frac{s}{n}}^k(\phi_0)) \geq \bar{\nu}(W_{\frac{s}{n}}^n(\phi_0))$
this implies
 \begin{align*}
d^2(W_{\frac{s}{n}}^n(\phi_0), \psi) \leq d^2(\phi_0, \psi) - 2 s \left(
\bar{\nu}(W_{\frac{s}{n}}^n(\phi_0)) - \bar{\nu}(\psi) \right)
\end{align*}
Sending $n \to \infty$ and using the lower semicontinuity of $\bar{\nu}$ yields
the lemma.
\end{proof}
\end{lemma}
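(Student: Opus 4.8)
The plan is to reduce the statement to the discrete implicit Euler scheme underlying the construction of the KEMM, establish a one-step ``resolvent inequality,'' iterate it, and pass to the limit. By the semigroup property $\phi_{t+s} = \lim_{n\to\infty} W_{s/n}^n(\phi_t)$ guaranteed by Theorem~\ref{MMex}, it suffices to prove the estimate in the case $t = 0$.

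The crux is the one-step inequality $d^2(W_\tau(\phi_0),\psi) \leq d^2(\phi_0,\psi) - 2\tau\bigl(\bar\nu(W_\tau(\phi_0)) - \bar\nu(\psi)\bigr)$. To obtain this I would use that $W_\tau(\phi_0)$ minimizes $\FF_{\phi_0,\tau}$ and compare its value against that of interior points $\eta_\lambda$ of the (unique, since $\bar\HH$ is CAT(0)) geodesic from $W_\tau(\phi_0)$ to $\psi$. Two structural inputs feed in: geodesic convexity of $\bar\nu$, giving $\bar\nu(\eta_\lambda) \leq (1-\lambda)\bar\nu(W_\tau(\phi_0)) + \lambda\bar\nu(\psi)$; and the quadrilateral (CAT(0)) inequality $d^2(\phi_0,\eta_\lambda) \leq (1-\lambda) d^2(\phi_0, W_\tau(\phi_0)) + \lambda d^2(\phi_0,\psi) - \lambda(1-\lambda) d^2(W_\tau(\phi_0),\psi)$. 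Substituting these into $\FF_{\phi_0,\tau}(W_\tau(\phi_0)) \leq \FF_{\phi_0,\tau}(\eta_\lambda)$, the terms equal to $\FF_{\phi_0,\tau}(W_\tau(\phi_0))$ cancel, and dividing by $\lambda$ and letting $\lambda \to 0^+$ yields the one-step bound.

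I would then iterate with $\tau = s/n$: applying the one-step inequality to the transition $W_{s/n}^k(\phi_0) \mapsto W_{s/n}^{k+1}(\phi_0)$ and summing over $k$ telescopes the $d^2(\cdot,\psi)$ terms, while each value $\bar\nu(W_{s/n}^k(\phi_0))$ can be bounded below by $\bar\nu(W_{s/n}^n(\phi_0))$ since a resolvent step never increases $\bar\nu$ (because $\FF_{\phi,\tau}(W_\tau(\phi)) \leq \FF_{\phi,\tau}(\phi) = \bar\nu(\phi)$). This gives $d^2(W_{s/n}^n(\phi_0),\psi) \leq d^2(\phi_0,\psi) - 2s\bigl(\bar\nu(W_{s/n}^n(\phi_0)) - \bar\nu(\psi)\bigr)$. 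Finally, sending $n\to\infty$, using $W_{s/n}^n(\phi_0) \to \phi_s$ from Theorem~\ref{MMex} together with the lower semicontinuity of $\bar\nu$ (so $\liminf_n \bar\nu(W_{s/n}^n(\phi_0)) \geq \bar\nu(\phi_s)$, i.e.\ on the favorable side of the inequality) and continuity of $d$, produces the claimed estimate.

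The main obstacle is not the algebra but verifying that the two structural inputs really are available on the completion $\bar\HH$: that $\bar\HH$ is a genuine CAT(0) space with unique geodesics between arbitrary points (inherited from Chen's $C^{1,1}$ geodesic convexity of $\HH$ via a completion argument), and that $\bar\nu$, being the lower-semicontinuous extension of $\nu$, is convex along those geodesics — which descends from geodesic convexity of $\nu$ on $\HH$ but requires a short limiting argument and some care about possible $+\infty$ values of $\bar\nu$ along the geodesic. Restricting attention to $\psi$ and $W_\tau(\phi_0)$ with finite $\bar\nu$ keeps all quantities finite where they are actually used.
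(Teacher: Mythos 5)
Your proposal matches the paper's proof essentially step for step: reduce to $t=0$ via the semigroup property, derive the one-step resolvent inequality by comparing $\FF_{\phi_0,\tau}$ at $W_\tau(\phi_0)$ with its values along the geodesic to $\psi$ using convexity of $\bar\nu$ and the NPC quadrilateral inequality, iterate with $\tau = s/n$ using monotonicity of $\bar\nu$ under resolvent steps, and pass to the limit via lower semicontinuity. Your explicit remark about sending $\lambda \to 0^+$ fills in a step the paper leaves implicit, but this is the same argument.
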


\section{The Consistency Theorem} \label{realthmpf}

In this section we prove Theorem \ref{realizationthm}.  To begin we record an
inequality of Chen relating the $K$-energy and distance in $\HH$.

\begin{thm} \label{Kenergydecay} (\cite{ChenSOKM3} Theorem 1.2) Let $\phi_0,
\phi_1 \in \HH$.  Then $\nu(\phi_1) \geq \nu(\phi_0) - d(\phi_0, \phi_1)
\sqrt{\mathcal C(\phi_0)}$.
\end{thm}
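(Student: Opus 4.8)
The plan is to join $\phi_0$ and $\phi_1$ by a geodesic and estimate the variation of $\nu$ along it, combining the first variation formula with the convexity of $\nu$ along geodesics. Recall that for a smooth path $\phi_t$ in $\HH$ one has
\begin{align*}
\frac{d}{dt}\nu(\phi_t) = - \int_M (s_{\phi_t} - \bar s)\,\dot\phi_t\, \gw_{\phi_t}^n,
\end{align*}
so that by Cauchy--Schwarz
\begin{align*}
\brs{ \tfrac{d}{dt}\nu(\phi_t) } \leq \left( \int_M (s_{\phi_t} - \bar s)^2 \gw_{\phi_t}^n \right)^{\frac12}\nm{ \dot\phi_t }{ \phi_t } = \sqrt{ \CC(\phi_t) }\ \nm{ \dot\phi_t }{ \phi_t },
\end{align*}
where $\nm{\cdot}{\phi_t}$ is the Mabuchi norm. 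If $\phi_t$, $t\in[0,1]$, were a \emph{smooth} geodesic from $\phi_0$ to $\phi_1$, then $\nm{\dot\phi_t}{\phi_t}$ would be constant equal to $d(\phi_0,\phi_1)$, the function $f(t):=\nu(\phi_t)$ would be convex (see the second variation identity below), and one would conclude immediately
\begin{align*}
\nu(\phi_1) = f(1) \geq f(0) + f'(0^+) \geq \nu(\phi_0) - \sqrt{ \CC(\phi_0) }\, d(\phi_0, \phi_1).
\end{align*}
Since smooth geodesics in $\HH$ generally do not exist, the real work is to push this argument through an approximation.

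For this I would use Chen's $\epsilon$-geodesics: for $\epsilon>0$ let $\phi_t^\epsilon:[0,1]\to\HH$ solve $\ddot\phi_t - \brs{\del\dot\phi_t}^2_{\phi_t} = \epsilon\,\gw^n/\gw_{\phi_t}^n$ with boundary values $\phi_0,\phi_1$, a nondegenerate complex Monge--Amp\`ere equation on $M$ times an annulus. By the estimates of \cite{ChenSOKM}, the family $\{\phi_t^\epsilon\}$ enjoys $\epsilon$-independent $C^{1,1}$ bounds, converges as $\epsilon\to0$ to the $C^{1,1}$ geodesic joining $\phi_0,\phi_1$, its length converges to $d(\phi_0,\phi_1)$, and $\tfrac{d}{dt}\nm{\dot\phi_t^\epsilon}{\phi_t^\epsilon}^2 = O(\epsilon)$, whence $\nm{\dot\phi_0^\epsilon}{\phi_0}\to d(\phi_0,\phi_1)$. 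The input about $\nu$ is the second variation identity, valid along any smooth path,
\begin{align*}
\frac{d^2}{dt^2}\nu(\phi_t) = \int_M \brs{ \mathcal{D}_{\phi_t}\dot\phi_t }^2_{\phi_t}\, \gw_{\phi_t}^n - \int_M \left( \ddot\phi_t - \brs{ \del\dot\phi_t }^2_{\phi_t} \right)(s_{\phi_t} - \bar s)\, \gw_{\phi_t}^n,
\end{align*}
where $\mathcal{D}_{\phi_t}$ is the Lichnerowicz operator of $\gw_{\phi_t}$, so the first term is nonnegative; along $\phi_t^\epsilon$ the second term equals $-\epsilon\int_M (s_{\phi_t^\epsilon} - \bar s)\,\gw^n$. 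Writing $g_\epsilon(t):=\nu(\phi_t^\epsilon)$ and using Taylor's theorem with integral remainder,
\begin{align*}
\nu(\phi_1) - \nu(\phi_0) = g_\epsilon'(0) + \int_0^1 (1-\tau)\, g_\epsilon''(\tau)\, d\tau \geq g_\epsilon'(0) - \epsilon \int_0^1\!\!\int_M \brs{ s_{\phi_\tau^\epsilon} - \bar s }\, \gw^n\, d\tau,
\end{align*}
and since $\phi_0$ is smooth the first variation at $t=0$ is legitimate, $g_\epsilon'(0) = -\int_M (s_{\phi_0} - \bar s)\,\dot\phi_0^\epsilon\,\gw_{\phi_0}^n \geq -\sqrt{\CC(\phi_0)}\,\nm{\dot\phi_0^\epsilon}{\phi_0}$.

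Letting $\epsilon\to0$ and using $\nm{\dot\phi_0^\epsilon}{\phi_0}\to d(\phi_0,\phi_1)$ then yields the theorem, \emph{provided} the error term $\epsilon\int_0^1\int_M \brs{s_{\phi_\tau^\epsilon}-\bar s}\,\gw^n\, d\tau\to0$. This is the main obstacle: the uniform $C^{1,1}$ bounds do not control $s_{\phi_\tau^\epsilon}$, which costs four derivatives of $\phi_\tau^\epsilon$, so one needs a further, more delicate a priori estimate along the $\epsilon$-geodesics bounding the space--time integral $\int_0^1\int_M \brs{s_{\phi_\tau^\epsilon}-\bar s}\,\gw^n\,d\tau$ independently of $\epsilon$. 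This, together with the $C^{1,1}$ approximation theory for geodesics, is the technical heart of the argument and is exactly what is carried out in \cite{ChenSOKM3}. (Alternatively, one may work directly with the $C^{1,1}$ geodesic, invoking Chen's results that $\nu$ is continuous and convex along it and has a well-defined initial velocity at the smooth endpoint $\phi_0$, and then apply the first paragraph verbatim; the same regularity issues reappear there in the guise of justifying the convexity and the one-sided derivative.)
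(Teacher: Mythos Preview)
The paper does not give its own proof of this statement: Theorem~\ref{Kenergydecay} is simply recorded as a result of Chen, cited from \cite{ChenSOKM3} Theorem~1.2, and then used as a black box in the proof of Lemma~\ref{lowergradsmoothpoints}. So there is no proof in the paper to compare against.

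That said, your sketch is a faithful outline of Chen's argument and you have correctly identified both the strategy (convexity of $\nu$ along geodesics plus the first-variation bound at the smooth endpoint) and the genuine technical obstacle (controlling the scalar-curvature error term along $\epsilon$-geodesics, which is beyond the uniform $C^{1,1}$ estimates). You also correctly note that the resolution lies in \cite{ChenSOKM3}. For the purposes of this paper, no more is needed than the citation.
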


\begin{lemma} \label{lowergradsmoothpoints} If $\phi \in \HH$, then $
\brs{\N_{-} \bar{\nu}}(\phi) = \sqrt{\CC(\phi)}$.
\begin{proof} Let $\{\phi_i\} \in \bar{\HH}$ be a sequence in $\bar{\HH}$
converging to $\phi$ in the distance topology.  For each $\phi_i$ choose a
sequence $\{\phi_i^j\} \in \HH$ converging to $\phi_i$ such that
\begin{align*}
 \lim_{j \to \infty} \nu(\phi_i^j) = \bar{\nu}(\phi_i).
\end{align*}
We then compute using Theorem \ref{Kenergydecay},
\begin{align*}
 \frac{\bar{\nu}(\phi) - \bar{\nu}(\phi_i)}{d(\phi, \phi_i)} =&\ \lim_{j \to
\infty} \frac{\bar{\nu}(\phi) - \bar{\nu}(\phi^j_i)}{d(\phi, \phi_i)}\\
\leq&\ \lim_{j \to \infty} \frac{\bar{\nu}(\phi) + \left( d(\phi, \phi_j^i)
\sqrt{\CC(\phi)} - \bar{\nu}(\phi) \right)}{d(\phi, \phi_i)}\\
=&\ \sqrt{\CC(\phi)} \lim_{j \to \infty} \frac{d(\phi,
\phi_j^i)}{d(\phi,\phi_i)}\\
=&\ \sqrt{\CC(\phi)}.
\end{align*}
Since $\phi_i$ was arbitrary, we conclude that $\brs{\N_-
\bar{\nu}} \leq \sqrt{\CC(\phi)}$.  

To show the reverse inequality, we must find an appropriate test curve. 
Unsurprisingly, the right thing to pick is the solution to Calabi flow with
initial condition $\phi = \phi_0$.  As observed in \cite{CalabiExtremal} the
Calabi flow equation is strictly parabolic, and so we have a short-time solution
to Calabi flow $\phi_t$ on $[0, \ge)$.  First, observe that if $\CC(\phi) = 0$
then the desired inequality holds automatically.  Thus assume $\CC(\phi) \neq 0$
and choose $\ge'$ sufficiently small that $\CC(\phi) \neq 0$ for all $t \in
[0,\ge')$.  Observe that for $T \in [0,\ge')$ we have by direct calculation
\begin{align*}
 \nu(\phi_0) - \nu(\phi_T) = \int_0^T \CC(\phi_t) dt.
\end{align*}
Moreover, using $\phi_t$ itself as a test curve in the definition of distance we
obtain
\begin{align*}
 d(\phi_0, \phi_T) \leq&\ \int_0^T \left( \int_M \left( \frac{\del \phi}{\del t}
\right)^2 \gw_{\phi_t}^n \right)^{\frac{1}{2}} dt\\
=&\ \int_0^T \sqrt{\CC(\phi_t)} dt\\
\leq&\ \left( \int_0^T dt \right)^{\frac{1}{2}} \left( \int_0^T \CC(\phi_t) dt
\right)^{\frac{1}{2}}\\
=&\ \sqrt{T} \left( \int_0^T \CC(\phi_t) dt \right)^{\frac{1}{2}}.
\end{align*}
Combining these two statements yields
\begin{align*}
 \lim_{T \to 0} \frac{\bar{\nu}(\phi) - \bar{\nu}(\phi_T)}{d(\phi,\phi_T)} =&\
\lim_{T \to 0} \frac{{\nu}(\phi) - {\nu}(\phi_T)}{d(\phi,\phi_T)}\\
\geq&\ \lim_{T \to 0} \frac{\left( \int_0^T \CC(\phi_t) dt
\right)^{\frac{1}{2}}}{\sqrt{T}}\\
\geq&\ \lim_{T \to 0} \frac{ \left( T \CC(\phi_0) - C T^2
\right)^{\frac{1}{2}}}{\sqrt{T}}\\
=&\ \sqrt{\CC(\phi_0)} = \sqrt{\CC(\phi)}.
\end{align*}
This finishes the proof of the lemma.  Observe that as part the proof we have
shown that if $\phi_t$ denotes a smooth solution to Calabi flow on $[0, T)$,
then for all $t_0 \in [0, T)$ one has
\begin{align} \label{CFder1}
  \lim_{h \to 0} \frac{\bar{\nu}(\phi_{t_0 +
h}) - \bar{\nu}(\phi_{t_0})}{d(\phi_{t_0}, \phi_{t_0 + h})} =
\sqrt{\CC(\phi_{t_0})} = \brs{\N_-
\bar{\nu}}(\phi_{t_0}).
 \end{align}
 Also, this has the further implication that, with the same setup,
 \begin{align} \label{CFder2}
  \lim_{h \to 0} \frac{d(\phi_{t_0}, \phi_{t_0+h})}{h} = \sqrt{\CC(\phi_{t_0})}.
 \end{align}
\end{proof}
\end{lemma}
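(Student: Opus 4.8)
The plan is to prove the two inequalities $\brs{\N_- \bar{\nu}}(\phi) \leq \sqrt{\CC(\phi)}$ and $\brs{\N_- \bar{\nu}}(\phi) \geq \sqrt{\CC(\phi)}$ separately. The upper bound should follow from Chen's inequality (Theorem \ref{Kenergydecay}), and the lower bound from testing the defining $\limsup$ of the lower slope against the short-time Calabi flow emanating from $\phi$, which is a near-optimal competitor.

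For the upper bound, note first that since $\phi \in \HH$ one has $\bar{\nu}(\phi) = \nu(\phi)$ and $\CC(\phi) < \infty$, so Theorem \ref{Kenergydecay} reads $\nu(\psi) \geq \nu(\phi) - d(\phi,\psi)\sqrt{\CC(\phi)}$ for every $\psi \in \HH$, i.e. $(\nu(\phi) - \nu(\psi))/d(\phi,\psi) \leq \sqrt{\CC(\phi)}$. The lower slope, however, is computed over sequences in the completion $\bar{\HH}$, so the real work is transferring this to arbitrary $\psi \in \bar{\HH}$. Given $\psi_i \to \phi$ in $\bar{\HH}$, I would for each $i$ pick $\psi_i^j \in \HH$ with $\psi_i^j \to \psi_i$ and $\nu(\psi_i^j) \to \bar{\nu}(\psi_i)$ --- possible because $\bar{\nu}$ is by construction the $\liminf$ of $\nu$ along approximating sequences --- apply Chen's inequality to each $\psi_i^j$, divide by $d(\phi,\psi_i)$, and send $j \to \infty$, using $d(\phi,\psi_i^j) \to d(\phi,\psi_i)$. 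This gives $(\nu(\phi) - \bar{\nu}(\psi_i))/d(\phi,\psi_i) \leq \sqrt{\CC(\phi)}$, and taking the $\limsup$ over $i$ finishes this half.

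For the lower bound, I would use the solution $\phi_t$ of Calabi flow with $\phi_0 = \phi$, which exists on some $[0,\ge)$ since the Calabi flow equation is strictly parabolic \cite{CalabiExtremal}. Along this flow $\tfrac{d}{dt}\nu(\phi_t) = - \CC(\phi_t)$, so $\nu(\phi) - \nu(\phi_T) = \int_0^T \CC(\phi_t)\,dt$; and using $\phi_t$ itself as a competitor in the definition of $d$ together with Cauchy--Schwarz gives $d(\phi,\phi_T) \leq \sqrt{T}\left(\int_0^T \CC(\phi_t)\,dt\right)^{1/2}$. Dividing, $(\nu(\phi) - \nu(\phi_T))/d(\phi,\phi_T)$ is bounded below by $\left(\tfrac{1}{T}\int_0^T \CC(\phi_t)\,dt\right)^{1/2}$, which tends to $\sqrt{\CC(\phi)}$ as $T \to 0^+$ by continuity of $t \mapsto \CC(\phi_t)$; hence the $\limsup$ defining $\brs{\N_- \bar{\nu}}(\phi)$ is at least $\sqrt{\CC(\phi)}$. (Only the case $\CC(\phi) \neq 0$ requires attention; for $\CC(\phi) = 0$ the bound is vacuous.)

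The step I expect to be the main obstacle is the passage to the completion in the upper bound: Chen's inequality is a statement about smooth potentials while the lower slope sees all of $\bar{\HH}$, so one must check carefully that the approximating sequences can be chosen to simultaneously realize $\bar{\nu}(\psi_i)$ and converge in distance, and that the estimate survives the iterated limit. Once both inequalities are in hand, I would record as a byproduct of the lower-bound computation that for a smooth Calabi flow $\phi_t$ on $[0,T)$ and any $t_0 \in [0,T)$ one has $\lim_{h \to 0}(\bar{\nu}(\phi_{t_0+h}) - \bar{\nu}(\phi_{t_0}))/d(\phi_{t_0},\phi_{t_0+h}) = \sqrt{\CC(\phi_{t_0})} = \brs{\N_- \bar{\nu}}(\phi_{t_0})$ and $\lim_{h \to 0} d(\phi_{t_0},\phi_{t_0+h})/h = \sqrt{\CC(\phi_{t_0})}$, which will be used in the proof of the consistency theorem.
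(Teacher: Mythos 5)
Your proposal matches the paper's proof essentially step for step: the upper bound via Chen's inequality applied to a double approximating sequence $\psi_i^j \in \HH$ with $\nu(\psi_i^j) \to \bar{\nu}(\psi_i)$, and the lower bound via testing the lower slope against short-time Calabi flow combined with the identity $\nu(\phi)-\nu(\phi_T) = \int_0^T \CC(\phi_t)\,dt$ and the Cauchy--Schwarz bound on $d(\phi,\phi_T)$. The byproduct formulas \eqref{CFder1} and \eqref{CFder2} you record at the end are also exactly what the paper extracts for later use.
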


\begin{proof}[Proof of Theorem \ref{realizationthm}]
Since both $\phi_t$ and $\til{\phi_t}$ are continuous paths in the distance
topology, the function $f(t) = d(\phi_t, \til{\phi}_t)$ is continuous, and $f(0)
= 0$ since $\phi_0 = \til{\phi}_0$.  By standard measure-theoretic lemmas it
suffices to show that 
 \begin{align*}
  D^+ f (t) := \limsup_{h \to 0^+} \frac{f(t + h) - f(t)}{h} \leq 0
 \end{align*}
for all $t \in [0, T)$.  Suppose this were false, and there existed
$\tau \in [0, T)$ and a sequence $t_k \to 0$ such that
 \begin{align} \label{real5}
  \lim_{k \to \infty} \frac{d(\phi_{\tau + t_k}, \til{\phi}_{\tau + t_k}) -
d(\phi_{\tau}, \til{\phi}_{\tau})}{t_k} > 0.
 \end{align}
Let $\psi_t$ denote the $K$-energy minimizing movement with initial condition
$\phi_{\tau}$.  For notational convenience we will shift the time variable so
that $\psi$ is thought to exist on $[\tau, \infty)$, i.e. $\psi_{\tau} =
\phi_{\tau}$.  Let us first rule out a particular case.  Suppose $\brs{\N_-
\bar{\nu}}(\phi_{\tau}) = 0$.  It follows from Theorem \ref{mayer2} and Lemma
\ref{mayer3} that $\psi_{t} = \psi_{\tau}$ for all $t \geq \tau$.  Moreover,
from Lemma \ref{lowergradsmoothpoints} we conclude that $\CC(\phi_{\tau}) = 0$,
and so
$\phi_{\tau}$ is a stationary solution of Calabi flow, i.e. $\phi_{t} =
\phi_{\tau} = \psi_{\tau} = \psi_t$ for all $t \geq \tau$.  It thus follows from
Theorem
\ref{distnonincr} that
\begin{align*}
 d(\phi_{\tau + t_k}, \til{\phi}_{\tau + t_k}) = d(\psi_{\tau}, \til{\phi}_{\tau
+ t_k}) = d(\psi_{\tau + t_k},
\til{\phi}_{\tau + t_k}) \leq d(\psi_{\tau}, \til{\phi}_{\tau}) = d(\phi_{\tau},
\til{\phi}_{\tau}),
\end{align*}
contradicting (\ref{real5}).

Now we assume $\brs{\N_- \bar{\nu}}(\phi_{\tau}) \neq 0$.  The strategy of what
follows is summarized in Figure
\ref{fig1}.  On the one hand the $K$-energy minimizing movement with initial
condition $\phi_{\tau}$, denoted $\psi_{\tau}$, ought to stay within the dotted
line by the distance nonincreasing property of Theorem \ref{distnonincr}.  But
on the other hand we can show that $\psi$ agrees to first order with $\phi$ at
$\tau$, contradicting this fact.
\begin{figure}[ht]
\begin{tikzpicture} [scale = 0.8]

\draw  plot[smooth, tension=.7] coordinates {(-4.2,3.6) (-2.0141,1.0123)
(0.2245,-0.1383) (2.6779,-0.455) (5.4547,0.0196)};
\node (v1) at (-4.2,3.6) {};
\node (v2) at (2.4674,-5.6618) {};
\node (v3) at (-0.7387,0.1926) {};
\node (v4) at (-1.1,-0.6906) {};

\draw [color = red] (v1) -- (v2);

\draw  [fill = black] (v3) circle (0.04);
\draw  [color=red,fill = red] (v4) circle (0.04);
\node at (-0.4458,0.373) {\tiny{$\widetilde{\phi}_{\tau}$}};
\node at (-1.9347,-0.6906) {\tiny{$\phi_{\tau} = \psi_{\tau}$}};
\draw [color = blue] plot[smooth, tension=.7] coordinates {(-1.081,-0.7074)
(-0.2274,-1.8121) (0.8271,-2.7494) (2.3168,-3.4022) (4.2416,-3.7202)};
\node at (3.3328,-3.3177) {\tiny{\textcolor{blue}{$\psi$}}};
\draw [dashed] plot[smooth, tension=.7] coordinates {(-1.081,-0.7074)
(0.3417,-1.1426) (2.7352,-1.3434) (5.5304,-0.858)};
\draw [<->] (1.8828,-1.3552) -- (1.8828,-0.4764);
\node at (2.8021,-0.8413) {\tiny{$d(\phi_{\tau}, \widetilde{\phi}_{\tau})$}};
\node at (4.812,0.1387) {\tiny{$\widetilde{\phi}$}};

\node at (2.2783,-4.9141) {\textcolor{red}{\tiny{$\phi$}}};
\end{tikzpicture}
\caption{A Hypothetical Bifurcation}
\label{fig1}
\end{figure}
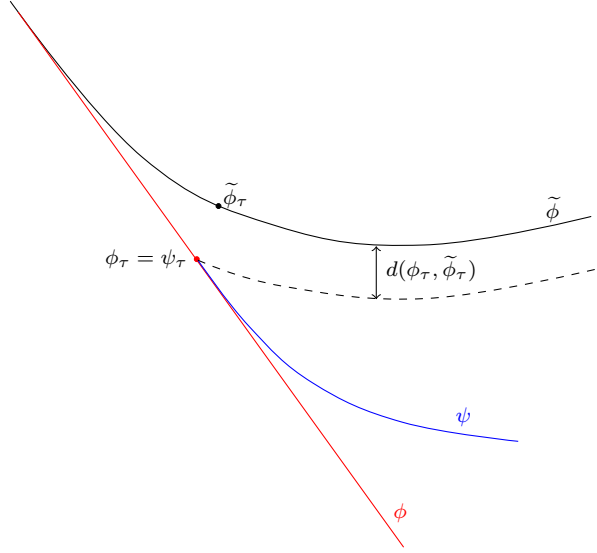
To begin, observe that by (\ref{CFder1}) the sequence of points $ \{
\phi_{\tau + t_k} \}$ satisfies (\ref{ud10}), and therefore by Theorem
\ref{uniquedirection} we conclude that there exists a sequence of times $s_k \to
0$ such that
\begin{align} \label{real7}
 \lim_{k \to \infty} \frac{d(\phi_{\tau + t_k}, \psi_{\tau +
s_k})}{d(\phi_{\tau+t_k},\psi_{\tau})} = 0.
\end{align}
Observe then that by the triangle inequality and Theorem \ref{distnonincr} we
have
\begin{align*}
 \lim_{k \to \infty} \frac{d(\til{\phi}_{\tau + t_k}, {\phi}_{\tau + t_k}) -
d(\til{\phi}_{\tau}, \phi_{\tau})}{t_k} \leq&\ \lim_{k \to \infty}
\frac{d(\til{\phi}_{\tau + t_k}, \psi_{\tau + t_k}) + d(\psi_{\tau + t_k},
{\phi}_{\tau + t_k}) - d(\til{\phi}_{\tau}, \psi_{\tau})}{t_k}\\
\leq&\ \lim_{k \to \infty} \frac{d(\psi_{\tau + t_k}, {\phi}_{\tau +
t_k})}{t_k}\\
\leq&\ \lim_{k \to \infty} \frac{d(\psi_{\tau + t_k}, \psi_{\tau + s_k}) +
d(\psi_{\tau + s_k}, \phi_{\tau + t_k})}{t_k}\\
=&\ I + II.
\end{align*}
We claim that both $I$ and $II$ are zero, contradicting the hypothesis
(\ref{real5}) and
finishing the proof.  For the term $I$ we recall from Remark
\ref{uniquedirectionrmk} that the times $s_k$ are chosen
so that
\begin{align*}
 d(\phi_{\tau}, \psi_{\tau + s_k}) = d(\phi_{\tau}, \phi_{\tau + t_k}) =:d_k.
\end{align*}
Note also that by (\ref{CFder2}) we have that
\begin{align*}
 d_k = d(\phi_{\tau}, \phi_{\tau + t_k}) = t_k \brs{\N_- \bar{\nu}}(\phi_{\tau})
+ t_k o(t_k).
\end{align*}
Also, by Theorem \ref{mayer2} we conclude that
\begin{align*}
 d_k = d(\phi_{\tau}, \psi_{\tau + s_k}) = d(\psi_{\tau}, \psi_{\tau + s_k}) =
s_k \brs{\N_- \bar{\nu}}(\psi_{\tau}) + s_k o(s_k).
\end{align*}
By combining the two equations above and using that $\brs{\N_-
\bar{\nu}}(\phi_{\tau}) = \brs{\N_- \bar{\nu}}(\psi_{\tau}) \neq 0$, a number of
consequences follow.  In particular we conclude that there exist constants $C,
k_0 > 0$ such that for all $k \geq k_0$ one has
\begin{align*}
 s_k \leq C t_k.
\end{align*}
We assume that all values of $k$ below satisfy $k \geq k_0$.  In particular,
using the three equations displayed above we conclude that
\begin{align*}
\brs{ t_k - s_k} = t_k o(t_k) + s_k o(s_k) = t_k o(t_k).
\end{align*}
Now assume that for some given $k$ we have $t_k >
s_k$.  By Theorem \ref{mayer2} we conclude that there is a constant $C > 0$ so
that
\begin{align*}
 d(\psi_{\tau}, \psi_{\tau + (t_k - s_k)}) \leq C (t_k - s_k) \leq C t_k o(t_k).
\end{align*}
By Theorem \ref{distnonincr} we obtain
\begin{align*}
 d(\psi_{\tau + s_k}, \psi_{\tau + t_k)}) \leq d(\psi_{\tau}, \psi_{\tau + (t_k
- s_k)}) \leq C t_k o(t_k).
\end{align*}
The case $s_k > t_k$ yields a similar inequality.  Thus finally we conclude that
\begin{align*}
 I =&\ \lim_{k \to \infty} \frac{d(\psi_{\tau + s_k}, \psi_{\tau + t_k})}{t_k}
\leq \lim_{k \to \infty} \frac{t_k o(t_k)}{t_k}  = 0.
\end{align*}
For the term $II$ we note using (\ref{CFder2}) and (\ref{real7}) that
\begin{align*}
 \lim_{k \to \infty} \frac{d(\psi_{\tau + s_k}, \phi_{\tau + t_k})}{t_k} =&\
\lim_{k \to \infty} \frac{d(\phi_{\tau+t_k}, \phi_{\tau})}{t_k} \cdot
\frac{d(\psi_{\tau + s_k}, \phi_{\tau + t_k})}{d(\phi_{\tau + t_k},
\phi_{\tau})}\\
 =&\ \left( \lim_{k \to \infty} \frac{d(\phi_{\tau+t_k}, \phi_{\tau})}{t_k}
\right) \cdot \left( \lim_{k \to \infty} \frac{d(\phi_{\tau +
t_k},\psi_{\tau + s_k})}{d(\phi_{\tau + t_k}, \psi_{\tau})} \right)\\
 =&\ \brs{\N_- \bar{\nu}}(\phi_{\tau}) \cdot 0\\
 =&\ 0.
\end{align*}
\end{proof}

\noindent Next we give the proof of Corollaries \ref{distancecor},
\ref{distancecor2}, \ref{minimizingcor}, and \ref{minimizingcor2}.

\begin{proof}[Proof of Corollary \ref{distancecor}] 
This follows directly from \cite{Mayer} Theorem 2.2.  A precise estimate of the
constant $B$ as referenced in that statement is given in \cite{SMM} Lemma 5.8,
and has the claimed dependencies.
\end{proof}

\begin{proof}[Proof of Corollary \ref{distancecor2}] 
 Again we want to apply \cite{Mayer} Theorem 2.2.  One observes that the proof
is in a sense a formal application of the convexity property and the estimate
of \cite{Mayer} Lemma 1.11.  This lemma claims that given $\phi_0 \in
\bar{\HH}$, $T > 0$, there is a constant $B$ with complicated dependencies such
that for all $\tau > 0$ and $j \in \mathbb N$ such that $j \tau \leq T$, one has
\begin{align*}
 d^2 (\phi_0, W_{\tau}^j(\phi_0)) \leq B j \tau.
\end{align*}
Therefore it suffices to prove this estimate with weaker dependencies in this
setting.  So, by the definition of the resolvent operator, we have
\begin{align*}
\frac{d^2(W_{\tau}^{j+1}(\phi_0), W_{\tau}^j(\phi_0)}{2 \tau} \leq
\bar{\nu}(W_{\tau}^j(\phi_0)) - \bar{\nu}(W_{\tau}^{j+1}(\phi_0)).
\end{align*}
Therefore by the triangle inequality and the Cauchy-Schwarz inequality we obtain
\begin{align*}
 d^2(\phi_0, W_{\tau}^j(\phi_0)) \leq&\ \left( \sum_{i = 0}^{j-1}
d(W_{\tau}^i(\phi_0), W_{\tau}^{i+1}(\phi_0)) \right)^2\\
 \leq&\ j \sum_{i=0}^{j-1}d^2(W_{\tau}^i(\phi_0), W_{\tau}^{i+1}(\phi_0)).
\end{align*}
Combining these inequalities and using that $\bar{\nu}$ is bounded below yields
\begin{align*}
 d^2( \phi_0, W_{\tau}^j(\phi_0)) \leq 2 j \tau \left(\bar{\nu}(\phi_0) -
\bar{\nu}(W_{\tau}^j(\phi_0)) \right) \leq C(\bar{\nu}(\phi_0), [\gw]) j \tau.
\end{align*}
Thus we have obtained the required bound depending only on $\bar{\nu}(\phi_0)$
and the underlying K\"ahler class $[\gw]$.  The corollary follows.
\end{proof}

\begin{proof} [Proof of Corollary \ref{minimizingcor}] The first statement
follows directly from \cite{Mayer} Theorem 2.39 and the fact that
\begin{align*}
 \inf_{\bar{\phi} \in \bar{\HH}} \bar{\nu}(\bar{\phi}) = \inf_{\phi \in \HH}
\nu(\phi).
\end{align*}
Thus the second statement follows directly from the first and Theorem
\ref{realizationthm}. 
\end{proof}

\begin{proof}[Proof of Corollary \ref{minimizingcor2}] 
Suppose the claim is false, i.e. for the given Calabi flow
$\phi_t$,
\begin{align*}
\gb := \lim_{t \to \infty} \CC(\phi_t) > \inf_{\phi \in \HH} \CC(\phi).
\end{align*}
Choose $\psi \in \HH$ such that $\CC(\psi) = \ga < \gb$, and let $\psi_t$ denote
the KEMM with initial condition $\psi$ guaranteed by Theorem \ref{MMex}.  Note
that by Lemmas \ref{mayer4} and \ref{lowergradsmoothpoints} it follows
that for all $t \geq 0$ one has
\begin{align*}
\brs{\N_- \bar{\nu}}(\psi_t) \leq \brs{\N_- \bar{\nu}}(\psi_0) =
\CC(\psi) = \ga.
\end{align*}
Also, it follows from \cite{Mayer} Corollary 2.18 that for a general KEMM $\psi_t$ one
has
\begin{align} \label{KEMMgradientprop}
\bar{\nu}(\psi_0) - \bar{\nu}(\psi_t) = \int_0^t \brs{\N_- \bar{\nu}}^2(\psi_s)
ds \leq t \ga.
\end{align}
Also, applying the gradient flow property to the Calabi flow solution $\phi_t$ yields
\begin{align*}
\bar{\nu}(\phi_0) - \bar{\nu}(\phi_t) \geq&\ \gb t.
\end{align*}
Combining these inequalities yields that, for a constant $C$ depending on $i$
and all $t \geq 0$, one has
\begin{align} \label{CC7}
\bar{\nu}(\psi_t) \geq&\ \bar{\nu}(\phi_t) + \left( \gb - \ga \right) t - C.
\end{align}
Applying Lemma \ref{minimizinglemma} and Theorem \ref{distnonincr} we obtain
\begin{gather} \label{CC10}
 \begin{split}
 0 \leq&\ d^2(\psi_{t+s}, \phi_t)\\
 \leq&\ d^2(\psi_t, \phi_t) - 2s \left( \bar{\nu}(\psi_{t+s}) -
\bar{\nu}(\phi_t) \right)\\
 \leq&\ d^2(\psi_0, \phi_0) - 2s \left( \bar{\nu}(\psi_t) -
\bar{\nu}(\phi_t) \right)  + 2s \left(\bar{\nu}(\psi_t) -
\bar{\nu}(\psi_{t+s})\right).  
 \end{split}
\end{gather}
Now choose $s = 1$ and observe by (\ref{KEMMgradientprop}) that
\begin{align} \label{CC20}
\bar{\nu}(\psi_t) - \bar{\nu}(\psi_{t+1}) = \int_t^{t+1} \brs{\N_-
\bar{\nu}}^2(\psi_s) ds \leq&\ C.
\end{align}
Thus plugging in (\ref{CC7}) and (\ref{CC20}) into (\ref{CC10}) yields
\begin{align*}
 0 \leq&\ C - 2 (\gb - \ga) t,
\end{align*}
which is a contradiction for sufficiently large $t$.  The corollary follows.
\end{proof}

\section{Convergence results} \label{convsec}

In this section we prove Theorem \ref{convthm}.  To begin we define a notion of
weak convergence in NPC spaces first introduced
by Jost \cite{Jost}.

\begin{defn} Let $(X, d)$ be a complete NPC space.  Given $\{x_n\} \subset X$ a
bounded sequence and a point $x \in X$, the \emph{asymptotic radius of $\{x_n\}$
around $x$} is
\begin{align*}
r(\{x_n\}, x) = \limsup_{n \to \infty} d(x_n, x).
\end{align*}
Moreover, the \emph{asymptotic radius of $\{x_n\}$} is
\begin{align*}
r(\{x_n\}) = \inf_{x \in X} r(\{x_n\}, x).
\end{align*}
Also, we declare that $x \in X$ is the \emph{asymptotic center of $\{x_n\}$} if
$r(\{x_n\},x) = r(\{x_n\})$.  We will show in Lemma \ref{uniqueac} that the
asymptotic center always exists and is unique.  Finally, we say that
\emph{$\{x_n\}$ weakly converges to $x$ } if $x$ is the asymptotic center of
every subsequence of $\{x_n\}$.  We say that $z \in X$ is a \emph{weak cluster
point} of $\{x_n\}$ if there is a subsequence which converges weakly to $z$.
\end{defn}

\begin{rmk} The definition of weak convergence above is a generalization of the
definition of weak
convergence in Hilbert spaces.
\end{rmk}

\begin{lemma} \label{uniqueac} Given $(X, d)$ a complete NPC space and $\{x_n\}$
a bounded sequence in $X$, there
exists a unique asymptotic center for $\{x_n\}$.
\begin{proof} First we show uniqueness.  Suppose $x$ and $y$ are both asymptotic
centers of $\{x_n\}$.  By hypothesis we have
\begin{align*}
\limsup_{n \to \infty} d(x_{n}, x) = r(\{x_n\}, x) = r(\{x_n\}) = r(\{x_n\}, y)
= \limsup_{n \to \infty} d( x_n, y).
\end{align*}
Consider the geodesic $\gg : [0,1] \to X$ connecting $x$ to $y$.  Fix any $t \in
(0,1)$.  By applying the triangle inequality we conclude for any $n$,
\begin{align*}
d^2(x_{n}, \gg(t)) \leq&\ (1-t) d^2(x_{n}, x) + t d^2(x_{n}, y) - t(1-t)
d^2(x,y)\\
\leq&\ (1-t) r^2(\{x_n\}, x) + t r^2(\{x_n\}, y) - t(1-t) d^2(x,y)\\
=&\ r^2(\{x_n\}) - t(1-t) d^2(x,y).
\end{align*}
Taking the limsup as $n$ goes to infinity we obtain
\begin{align*}
r(\{x_n\}) =&\ \inf_{x \in X} r(\{x_n\}, x)\\
\leq&\ r(\{x_n\}, \gg(t))\\
\leq&\ \sqrt{ r^2(\{x_n\} - t(1-t) d(x,y)}.
\end{align*}
It follows that $d(x,y) = 0$, and so $x = y$ as required.

Now we show existence.  As the sequence is bounded, certainly there exists $x
\in X$ such that $r(\{x_n\}, x) < \infty$.  Thus choose a sequence $\{y_n\}$
realizing $\inf_{x \in X} r(\{x_n\}, x)$.  By repeating the argument estimate
above for uniqueness shows directly that $\{y_n\}$ is a Cauchy sequence.  Since
$X$ is complete there exists a limit point $y_{\infty}$ which is an asymptotic
center.
\end{proof}
\end{lemma}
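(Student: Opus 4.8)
The plan is to rely on the single convexity estimate that characterizes NPC spaces: for any geodesic $\gg : [0,1] \to X$ with endpoints $p = \gg(0)$ and $q = \gg(1)$, any $z \in X$, and any $t \in [0,1]$,
\[
d^2(z, \gg(t)) \leq (1-t)\, d^2(z, p) + t\, d^2(z, q) - t(1-t)\, d^2(p, q),
\]
together with completeness of $(X,d)$. As a preliminary I would note that $r(\{x_n\})$ is a finite nonnegative number: since $\{x_n\}$ is bounded, $r(\{x_n\}, x_0) < \infty$ for any fixed $x_0 \in X$, so the infimum defining $r(\{x_n\})$ is taken over a nonempty set of finite values. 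I would also record the elementary observation that the map $x \mapsto r(\{x_n\}, x)$ is $1$-Lipschitz, which follows immediately from applying the triangle inequality inside the $\limsup$.

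For uniqueness, suppose $x$ and $y$ are both asymptotic centers, so that $r(\{x_n\}, x) = r(\{x_n\}, y) = r := r(\{x_n\})$. I would apply the displayed inequality with $z = x_n$ along the geodesic from $x$ to $y$, fix some $t \in (0,1)$, and take $\limsup_{n \to \infty}$; this yields $r(\{x_n\}, \gg(t))^2 \leq r^2 - t(1-t)\, d^2(x,y)$. Since $r(\{x_n\}, \gg(t)) \geq r(\{x_n\}) = r$ by definition, the extra term must vanish, forcing $d(x,y) = 0$.

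For existence, I would pick a minimizing sequence $\{y_k\} \subset X$ with $r(\{x_n\}, y_k) \to r$ and run the same computation with $x, y$ replaced by $y_j, y_k$ and $t = \tfrac12$: writing $m_{jk}$ for the corresponding midpoint, one gets $r^2 \leq r(\{x_n\}, m_{jk})^2 \leq \tfrac12 r(\{x_n\}, y_j)^2 + \tfrac12 r(\{x_n\}, y_k)^2 - \tfrac14 d^2(y_j, y_k)$. Since both averaged terms converge to $r^2$, this forces $d(y_j, y_k) \to 0$, so $\{y_k\}$ is Cauchy; by completeness it has a limit $y_\infty$, and the $1$-Lipschitz property gives $r(\{x_n\}, y_\infty) = \lim_{k \to \infty} r(\{x_n\}, y_k) = r$, so $y_\infty$ is an asymptotic center.

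I do not anticipate a genuine obstacle: this is essentially the Hilbert-space argument for the nearest point of a bounded set, transported through the NPC inequality. The one point I would isolate as the crux is recognizing that the convexity estimate does more than produce near-minimizers — applied to a pair of near-minimizers it forces them to be close, so every minimizing sequence is automatically Cauchy, and completeness then converts this into existence.
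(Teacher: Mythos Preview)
Your proposal is correct and follows essentially the same route as the paper: the NPC convexity inequality is applied to the geodesic between two (near-)minimizers of $r(\{x_n\},\cdot)$ to force uniqueness and to show that any minimizing sequence is Cauchy, after which completeness yields existence. If anything your version is slightly more careful---you explicitly record the $1$-Lipschitz dependence of $r(\{x_n\},\cdot)$ to verify that the limit $y_\infty$ actually realizes the infimum, a step the paper leaves implicit.
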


\begin{lemma} (\cite{BH} Proposition 2.4 pg. 176) \label{projectionlemma} Let
$(X, d)$ be a complete NPC space.  Let
$C$ denote a complete convex subset of $X$.  Then
\begin{enumerate}
\item{ For every $x \in X$, there exists a unique point $\pi_C(x) \in C$ such
that $d(x, \pi_C(x)) = d(x, C)$.}
\item{ If $\gg$ denotes the geodesic connecting $x$ to $\pi_C(x)$ and $y \in
\gg$ then $\pi_C (x) = \pi_C(y)$}
\item{ If $x \in X \backslash C$ and $y \in C$ satisfies $\pi_C(x) \neq y$, then
$\ga(x,\pi_C(x),y) \geq \frac{\pi}{2}$.}
\end{enumerate}
\begin{proof} (1) Consider a sequence of points $\{y_n\} \in C$ realizing $d(x,
C)$.  We will show that $\{y_n\}$ is a Cauchy sequence, which simultaneously
establishes existence and uniqueness.  

(2) This follows directly from the triangle inequality.

(3) First note that if $\ga(x,\pi_C(x),y) < \frac{\pi}{2}$, then by choosing
$x'$ on the geodesic connecting $x$ to $\pi_C(x)$ very close to $\pi_C(x)$ and
likewise choosing $y'$ on the geodesic connecting $y$ to $\pi_C(x)$ sufficiently
close to $\pi_C(x)$ , we can guarantee that the corresponding angle in the
comparison triangle $
\bar{\gD}(x',\pi_C(x), y') \subset \mathbb R^2$ is also less than
$\frac{\pi}{2}$.  But using the NPC condition this implies that there is a point
$p \in C$ on the geodesic connecting $\pi_C(x)$ to $y$ satisfying $d(x',p) <
d(x',\pi_C(x))$.  But by (2) we have $d(x',\pi_C(x)) = d(x',C)$, a
contradiction.
\end{proof}
\end{lemma}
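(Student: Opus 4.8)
The plan is to prove the three assertions in sequence, using only the NPC midpoint comparison inequality (the CN inequality of Bruhat--Tits), the triangle inequality, and the convexity and completeness of $C$. For (1), I would fix $x \in X$, set $D := d(x,C)$, and choose a minimizing sequence $\{y_n\} \subset C$ with $d(x,y_n) \to D$. The crucial observation is that for any $m,n$ the midpoint $w_{mn}$ of the geodesic $[y_m,y_n]$ lies in $C$ by convexity, hence $d(x,w_{mn}) \geq D$, while the NPC inequality gives
\[
d^2(x, w_{mn}) \leq \tfrac{1}{2} d^2(x, y_m) + \tfrac{1}{2} d^2(x, y_n) - \tfrac{1}{4} d^2(y_m, y_n).
\]
Rearranging yields $\tfrac{1}{4} d^2(y_m,y_n) \leq \tfrac{1}{2} d^2(x,y_m) + \tfrac{1}{2} d^2(x,y_n) - D^2 \to 0$ as $m,n \to \infty$, so $\{y_n\}$ is Cauchy; completeness of $C$ then produces a limit point, which I would call $\pi_C(x)$, satisfying $d(x,\pi_C(x)) = D$. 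Uniqueness follows from the same computation applied to two putative minimizers $y,y' \in C$: their midpoint lies in $C$, and the displayed inequality forces $d^2(y,y') \leq 0$. This is precisely the Cauchy-sequence claim indicated in the statement.

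For (2), with $\gg$ the geodesic from $x$ to $\pi_C(x)$ and $y \in \gg$, I would use that $y$ lies on that segment to get $d(y,\pi_C(x)) = d(x,\pi_C(x)) - d(x,y)$, and then for any $z \in C$ the triangle inequality gives $d(y,z) \geq d(x,z) - d(x,y) \geq d(x,\pi_C(x)) - d(x,y) = d(y,\pi_C(x))$. Thus $\pi_C(x)$ is also the nearest point of $C$ to $y$, and uniqueness from (1) yields $\pi_C(y) = \pi_C(x)$.

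For (3), I would argue by contradiction: suppose $\ga(x,\pi_C(x),y) < \tfrac{\pi}{2}$ and write $p := \pi_C(x)$. Choosing $x'$ on the geodesic $[p,x]$ and $y'$ on the geodesic $[p,y]$, both different from $p$ and close to it, and using that the Alexandrov angle at $p$ is the monotone limit of the Euclidean comparison angles of the sub-triangles $\bar{\gD}(x',p,y') \subset \mathbb R^2$, the comparison angle at the vertex corresponding to $p$ becomes $< \tfrac{\pi}{2}$ once $x',y'$ are sufficiently close to $p$; in a planar triangle with an acute angle at a vertex, moving a short distance from that vertex toward the opposite one strictly decreases the distance to it. Transporting the corresponding point $q$ back to the geodesic $[p,y'] \subset [p,y] \subset C$ and applying the NPC comparison would produce a point $q \in C$ with $d(x',q) < d(x',p)$. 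But $x'$ lies on $[p,x]$, so part (2) gives $\pi_C(x') = p$, i.e. $d(x',p) = d(x',C)$, contradicting $d(x',q) < d(x',p)$.

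The only genuinely delicate point should be (3): one must carefully invoke the relation between the Alexandrov angle at $p$ and the comparison angles of arbitrarily small sub-triangles, and keep the direction of the NPC comparison straight (distances in $X$ are no larger than in the model triangle) so that the contradiction is drawn the right way. The estimates in (1) and (2) are essentially formal consequences of the CN inequality and the triangle inequality.
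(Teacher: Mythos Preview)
Your proposal is correct and follows essentially the same route as the paper's proof: a Cauchy-sequence argument for (1), the triangle inequality for (2), and for (3) a contradiction via a small comparison triangle at $\pi_C(x)$ whose acute Euclidean angle produces a point of $C$ closer to $x'$ than $\pi_C(x')$. You have in fact supplied the CN-inequality computation for (1) that the paper only announces, and your handling of the comparison direction in (3) is the same as the paper's (with your $q$ playing the role of the paper's $p$).
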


\begin{lemma} (cf. \cite{Espinola} Proposition 5.2) \label{wkconvequiv} If a
bounded sequence $\{x_n\} \subset X$
converges weakly to a point $x \in X$ then for any geodesic $\gg$ passing
through $x$, one has
\begin{align*}
 \lim_{n \to \infty} d(x, \pi_{\gg}(x_n)) = 0.
\end{align*}
\begin{proof} If the claim were false then let $\gg$ denote a geodesic segment
containing $x$ such that
\begin{align*}
\lim_{n \to \infty} d(x, \pi_{\gg}(x_n)) \neq 0
\end{align*}
Observe that since the segment $\gg$ is compact there exists a subsequence
$\{x_{n_i}\}$ and a point $y \in \gg, y \neq x$ such that $\{\pi_{\gg} x_{n_i}\}
\to y$. 
By the definition of the projection operator, for all $n_i$ one has $d(x_{n_i},
\pi_{\gg}(x_{n_i})) \leq d(x_{n_i}, x)$.  Taking the limit as $i \to \infty$
yields
\begin{align*}
\lim_{i \to \infty} d(x_{n_i}, y) \leq \lim_{i \to \infty} d(x_{n_i}, x).
\end{align*}
Since $x$ is the asymptotic center of every subsequence of $\{x_n\}$, it follows
from the above inequality that $y$ is the asymptotic center of the sequence
$\{x_{n_i}\}$.  However, by Lemma \ref{uniqueac} asymptotic centers are unique
and so $y = x$, a contradiction.
\end{proof}
\end{lemma}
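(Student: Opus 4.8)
The plan is to argue by contradiction, exploiting the uniqueness of asymptotic centers established in Lemma \ref{uniqueac}. Suppose the conclusion fails, so there is a geodesic $\gg$ through $x$ with $\lim_{n \to \infty} d(x, \pi_{\gg}(x_n)) \neq 0$. The idea is that if the projections $\pi_{\gg}(x_n)$ do not converge to $x$, then along some subsequence they converge to a point $y \in \gg$ with $y \neq x$, and this $y$ will turn out to be an asymptotic center of the corresponding subsequence, contradicting the assumed weak convergence to $x$.

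First I would restrict to a compact geodesic segment of $\gg$ containing $x$ in its interior (or as an endpoint); this is legitimate since the hypothesis that $d(x, \pi_{\gg}(x_n)) \not\to 0$ persists after passing to a bounded portion of $\gg$. Compactness of this segment, together with the failure of convergence to $x$, yields a subsequence $\{x_{n_i}\}$ and a point $y \in \gg$ with $y \neq x$ and $\pi_{\gg}(x_{n_i}) \to y$. Next I would use the defining property of the projection from Lemma \ref{projectionlemma}(1): for each $i$, $d(x_{n_i}, \pi_{\gg}(x_{n_i})) \leq d(x_{n_i}, x)$, since $x$ itself lies on $\gg$. Taking limits superior as $i \to \infty$, and using $\pi_{\gg}(x_{n_i}) \to y$ together with the triangle inequality to replace $\pi_{\gg}(x_{n_i})$ by $y$, gives $\limsup_{i} d(x_{n_i}, y) \leq \limsup_{i} d(x_{n_i}, x)$, i.e. $r(\{x_{n_i}\}, y) \leq r(\{x_{n_i}\}, x)$.

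Finally, since $x$ is the asymptotic center of every subsequence of $\{x_n\}$ — in particular of $\{x_{n_i}\}$ — we have $r(\{x_{n_i}\}, x) = r(\{x_{n_i}\})$, which is the infimal value of $r(\{x_{n_i}\}, \cdot)$. Combined with the inequality just derived, this forces $r(\{x_{n_i}\}, y) = r(\{x_{n_i}\})$, so $y$ is also an asymptotic center of $\{x_{n_i}\}$. By Lemma \ref{uniqueac} the asymptotic center is unique, hence $y = x$, contradicting $y \neq x$. The only mildly delicate point is the passage from convergence of the projections to the inequality of asymptotic radii, which requires a clean triangle-inequality estimate $|d(x_{n_i}, y) - d(x_{n_i}, \pi_{\gg}(x_{n_i}))| \leq d(y, \pi_{\gg}(x_{n_i})) \to 0$; everything else is a direct invocation of the preceding lemmas.
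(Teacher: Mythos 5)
Your proposal is correct and follows essentially the same approach as the paper: extract a convergent subsequence of projections via compactness of the segment, use the minimizing property of the projection together with the triangle inequality to show $y$ has asymptotic radius no larger than $x$, and then invoke uniqueness of asymptotic centers. You are in fact a bit more careful than the paper's own exposition in writing $\limsup$ rather than $\lim$ and in making explicit the triangle-inequality estimate $|d(x_{n_i}, y) - d(x_{n_i}, \pi_{\gg}(x_{n_i}))| \leq d(y, \pi_{\gg}(x_{n_i})) \to 0$.
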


\begin{lemma} (\cite{Bacak2} Lemma 3.1) \label{wklimitsinconv} Let $C$ denote a
closed convex subset of a
complete NPC space $(X, d)$.  If $\{x_n\} \subset C$ and $\{x_n\}$ converges
weakly to $x$, then $x \in C$.
 \begin{proof}
Suppose $x \notin C$.  Let $\gg : [0,1] \to X$ denote the geodesic connecting
$x$ to $\pi_C(x)$.  We aim to show that $\pi_{\gg}(x_n) = \pi_C(x)$ for all $n$.

We argue by contradiction and assume $\pi_{\gg}(x_n) \neq \pi_C(x)$.  Observe
that if $\pi_{\gg}(x_n) \in C$, then by Lemma \ref{projectionlemma} (2) we
would have $\pi_{\gg}(x_n) = \pi_C \pi_{\gg}(x_n) = \pi_C x$, thus
$\pi_{\gg}(x_n) \notin C$.
Applying Lemma \ref{projectionlemma} (3) we conclude
\begin{align*}
 \ga(\pi_{\gg}(x_n), \pi_C(x), x_n) = \ga(\pi_{\gg}(x_n), \pi_C \pi_{\gg}(x_n),
x_n) \geq \frac{\pi}{2}.
\end{align*}
On the other hand $\gg$ is itself a closed convex set.  Moreover note that $x_n
\notin \gg$ for otherwise $ \pi_{\gg}(x_n) = x_n \in C$, contradicting the
argument above that $\pi_{\gg}(x_n) \notin C$.  Thus applying Lemma
\ref{projectionlemma} (3) to the set $\gg$ we conclude
\begin{align*}
 \ga(x_n, \pi_{\gg}(x_n), \pi_C(x)) \geq \frac{\pi}{2}.
\end{align*}
Since $\ga(\pi_{\gg}(x_n), x_n, \pi_{C}(x)) > 0$, these three inequalities
contradict the triangle inequality for NPC space, finishing the proof of the
claim that $\pi_{\gg}(x_n) = \pi_C(x)$.  It follows that
\begin{align*}
 \lim_{n \to \infty} d(\pi_{\gg}(x_n), x) = \lim_{n \to \infty} d(\pi_C(x), x))
> 0.
\end{align*}
But since $\{x_n\}$ converges weakly to $x$, Lemma \ref{wkconvequiv} guarantees
that for the geodesic $\gg$,
\begin{align*}
 \lim_{n \to \infty} d(\pi_{\gg}(x_n), x) = 0.
\end{align*}
This is a contradiction, and so $x \in C$.
\end{proof}
\end{lemma}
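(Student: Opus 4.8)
The plan is to argue by contradiction: suppose $x \notin C$ and derive a contradiction with the weak convergence via Lemma \ref{wkconvequiv}. Since $C$ is closed in the complete space $X$ it is itself complete, so Lemma \ref{projectionlemma}(1) furnishes the nearest point projection $\pi_C(x) \in C$ with $d(x, \pi_C(x)) = d(x, C) > 0$. Let $\gg : [0,1] \to X$ be the geodesic from $x$ to $\pi_C(x)$; note $x \in \gg$, and $\gg$ is itself a closed convex subset of $X$. The entire argument then rests on the claim that $\pi_{\gg}(x_n) = \pi_C(x)$ for every $n$. Indeed, granting this, $d(\pi_{\gg}(x_n), x) = d(\pi_C(x), x) = d(x, C) > 0$ for all $n$, whereas weak convergence of $\{x_n\}$ to $x$ together with Lemma \ref{wkconvequiv} applied to the geodesic $\gg$ (which passes through $x$) forces $d(\pi_{\gg}(x_n), x) \to 0$, a contradiction.

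To prove the claim, fix $n$ and suppose instead $\pi_{\gg}(x_n) \neq \pi_C(x)$. First I would rule out $\pi_{\gg}(x_n) \in C$: since $\pi_{\gg}(x_n)$ lies on the geodesic $\gg$ joining $x$ to its $C$-projection, Lemma \ref{projectionlemma}(2) gives $\pi_C(\pi_{\gg}(x_n)) = \pi_C(x)$; if moreover $\pi_{\gg}(x_n) \in C$ then $\pi_C(\pi_{\gg}(x_n)) = \pi_{\gg}(x_n)$, so $\pi_{\gg}(x_n) = \pi_C(x)$, contradicting our assumption. Hence $\pi_{\gg}(x_n) \in X \backslash C$, and likewise $x_n \notin \gg$ (else $\pi_{\gg}(x_n) = x_n \in C$, just excluded).

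Now I would extract two angle estimates from Lemma \ref{projectionlemma}(3), applied to two different convex sets. Applied to $C$, with exterior point $\pi_{\gg}(x_n) \in X \backslash C$ and auxiliary point $x_n \in C$ — which differs from $\pi_C(\pi_{\gg}(x_n)) = \pi_C(x)$, the degenerate case $x_n = \pi_C(x)$ being trivial (then $\pi_{\gg}(x_n) = \pi_{\gg}(\pi_C(x)) = \pi_C(x)$ already) and handled separately — it gives $\ga(\pi_{\gg}(x_n), \pi_C(x), x_n) \geq \tfrac{\pi}{2}$. Applied to $\gg$, with exterior point $x_n \notin \gg$ and auxiliary point $\pi_C(x) \in \gg$, it gives $\ga(x_n, \pi_{\gg}(x_n), \pi_C(x)) \geq \tfrac{\pi}{2}$. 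Since the remaining angle $\ga(\pi_{\gg}(x_n), x_n, \pi_C(x))$ of the geodesic triangle with vertices $x_n, \pi_{\gg}(x_n), \pi_C(x)$ is strictly positive, the three vertex angles sum to more than $\pi$, which is impossible for a geodesic triangle in an NPC space. This contradiction proves $\pi_{\gg}(x_n) = \pi_C(x)$, and hence the lemma.

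The step I expect to require the most care is the double application of Lemma \ref{projectionlemma}(3): one must correctly identify, in each invocation, which of the three points $x_n, \pi_{\gg}(x_n), \pi_C(x)$ serves as the exterior point and which as the auxiliary point, and check the non-degeneracy hypothesis (that the projection of the exterior point onto the relevant convex set differs from the auxiliary point), along with the small side cases $x_n = \pi_C(x)$ and $x_n \in \gg$. Everything else is a direct appeal to results already in hand: the existence, uniqueness, and geodesic-invariance of projections in Lemma \ref{projectionlemma}, the characterization of weak limits in Lemma \ref{wkconvequiv}, and the angle-sum bound for geodesic triangles built into the NPC hypothesis.
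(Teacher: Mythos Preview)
Your proposal is correct and follows essentially the same argument as the paper: contradiction via the key claim $\pi_{\gg}(x_n) = \pi_C(x)$, established by two applications of Lemma~\ref{projectionlemma}(3) yielding two angles $\geq \tfrac{\pi}{2}$ in the geodesic triangle on $x_n, \pi_{\gg}(x_n), \pi_C(x)$, which violates the NPC angle-sum bound, and then a contradiction with Lemma~\ref{wkconvequiv}. If anything you are slightly more careful than the paper in isolating the degenerate cases $x_n = \pi_C(x)$ and $x_n \in \gg$ and in spelling out the roles of exterior and auxiliary points in each invocation of Lemma~\ref{projectionlemma}(3).
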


\begin{lemma} \label{weaklsclemma} (\cite{Bacak} Lemma 3.1) Let $(X, d)$ be a
complete NPC space.  If $f
: X \to (-\infty,
\infty]$ is a lower semicontinuous convex function, then it is weakly lower
semicontinuous.
\begin{proof} If the claim were false, we could find $x \in X$ and $\{x_n\} \in
X$ converging weakly to $x$ such that
\begin{align*}
 \liminf_{n \to \infty} f(x_n) < f(x).
\end{align*}
In particular, there is a subsequence $x_{n_k}$ and $\ge > 0$ such that
$f(x_{n_k}) < f(x) - \ge$ for all $k$.  Let $C$ denote the closure of the convex
hull of $\{x_{n_k}\}$.  From convexity and lower semicontinuity of $f$ we
conclude that for all $y \in C$ one has
\begin{align*}
 f(y) < f(x) - \ge.
\end{align*}
However, by Lemma \ref{wklimitsinconv} we conclude that $x \in C$, and thus we
obtain
\begin{align*}
 f(x) < f(x) - \ge,
\end{align*}
a contradiction. 
\end{proof}
\end{lemma}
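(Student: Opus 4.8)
The plan is to argue by contradiction, the crucial input being Lemma \ref{wklimitsinconv}: a weak limit of a sequence contained in a closed convex set lies in that set. Suppose $f$ fails to be weakly lower semicontinuous. Then there is a bounded sequence $\{x_n\}$ converging weakly to some $x \in X$ with $\liminf_{n \to \infty} f(x_n) < f(x)$. Choosing $\ge > 0$ with $\liminf_{n} f(x_n) < f(x) - \ge$ and passing to a subsequence, we may assume $f(x_n) < f(x) - \ge$ for every $n$. Note that a subsequence of a weakly convergent sequence converges weakly to the same point, since every subsequence of the subsequence is a subsequence of the original and hence has $x$ as its (unique, by Lemma \ref{uniqueac}) asymptotic center; thus after this reduction $\{x_n\}$ still converges weakly to $x$.

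Next I would bound $f$ from above on a suitable convex set. Let $C$ be the closure of the convex hull of $\{x_n\}$; since $\{x_n\}$ is bounded $C$ is bounded, and since the closure of a convex subset of an NPC space is again convex, $C$ is a closed convex set. By convexity of $f$ along geodesics, any point obtained as a geodesic combination of finitely many of the $x_n$ satisfies $f \le \max_i f(x_{n_i}) < f(x) - \ge$; since each point of the convex hull is produced by finitely many such geodesic combinations, an induction gives $f < f(x) - \ge$ on the convex hull of $\{x_n\}$. Passing to the closure and invoking lower semicontinuity of $f$ then yields $f(y) \le f(x) - \ge$ for all $y \in C$.

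Finally, since $\{x_n\} \subset C$ with $C$ closed and convex and $\{x_n\}$ converges weakly to $x$, Lemma \ref{wklimitsinconv} gives $x \in C$. But then $f(x) \le f(x) - \ge$, which is absurd because $\ge > 0$. This contradiction proves that $f$ is weakly lower semicontinuous.

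The one place requiring a little care is the second paragraph: verifying that $f$ is bounded above by $f(x) - \ge$ on the closed convex hull. The convexity inequality handles finite geodesic combinations directly, an easy induction along the construction of the convex hull propagates the strict bound, and the passage to the closure is precisely where lower semicontinuity enters. The remaining steps are routine applications of the definition of weak convergence and of Lemma \ref{wklimitsinconv}.
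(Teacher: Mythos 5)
Your argument is essentially identical to the paper's: pass to a subsequence with $f(x_{n_k}) < f(x) - \ge$, take $C$ to be the closed convex hull, bound $f$ on $C$ via convexity and lower semicontinuity, and apply Lemma \ref{wklimitsinconv} to place $x$ in $C$ and derive the contradiction. You are a bit more careful than the paper in two small ways — explicitly noting that a subsequence of a weakly convergent sequence still converges weakly to the same point, and correctly weakening the strict inequality to $\le$ after passing to the closure — but neither changes the structure of the argument.
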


Next we record the convergence theorem for minimizing movements proved by
Ba\u{c}\'ak
mentioned in the introduction. 

\begin{thm} \label{Bacakthm} (\cite{Bacak} Theorem 1.5) Given $(X, d)$ a
complete NPC space and $f : X \to (-\infty, \infty]$ a lower semicontinuous
convex function.  Assume that $f$ attains its minimum on $X$.  Then for all $x
\in X$, the $f$-minimizing movement with initial condition $x$ converges weakly
to a minimizer of $f$ as $\gl \to \infty$.
\begin{proof} Fix a sequence $\{t_n\} \to \infty$, and for notational
convenience let $x_n = F_{t_n} x$.  Let
\begin{align*}
C = \{y \in X | f(y) = \inf_{x \in X} f(x) \}.
\end{align*}
By assumption $C \neq \emptyset$.  Note that it is clear that given any $y \in
C$, the minimizing movement with initial condition $y$ is stationary, i.e. $F_t
y = y$ for all $t \geq 0$.  It then follows from the distance nonincreasing
property of $f$-minimizing movements that for
any $x \in X$, $y \in C$ and $n > m$ one has $d(x_{n}, y) \leq d(x_m, y)$.

Now we claim that if all weak cluster points of $\{x_n\}$ lie in $C$ then
there is a unique weak cluster point of $\{x_n\}$ in $C$.  Suppose $c_1, c_2 \in
C$ are weak cluster points of $\{x_n\}$.  In particular, there exists a
subsequence $\{x_{n_k} \}$ converging weakly to $c_1$ and a subsequence
$\{x_{m_l} \}$ converging weakly to $c_2$.  Without loss of generality let
us assume that $r(\{x_{n_k} \}) \leq r(\{x_{m_l}\})$.  Fix $\ge > 0$,
and then fix $K \in \mathbb N$ such that $d(x_{n_k}, c_1) < r(\{x_{n_k}\}) +
\ge$ for all $k \geq K$.  By the distance nonincreasing property we immediately
conclude that
\begin{align*}
 d(x_{m_k}, c_1) < r(\{x_{n_k}\}) + \ge \leq&\ r(\{x_{n_l} \}) + \ge
\end{align*}
for all $l$ large enough to ensure $n_l \geq n_K$.  It follows that $c_1$ is an
asymptotic center for $\{x_{m_k}\}$, but these are unique by Lemma
\ref{uniqueac}, and
so $c_1 = c_2$.

We now show that all weak cluster points of $\{x_n\}$ do indeed lie in $C$. 
With the claim of uniqueness above, the proof will be finished.  By
(\cite{Mayer} Theorem 2.39) the sequence $\{x_n\}$ is minimizing for $f$, i.e.
\begin{align*}
\lim_{n \to \infty} f(x_n) = \inf_{x \in X} f(x). 
\end{align*}
Since $f$ is weakly lower semicontinuous by Lemma \ref{weaklsclemma}, we
conclude
that all weak cluster points of $\{x_n\}$ are in $C$.
\end{proof}
\end{thm}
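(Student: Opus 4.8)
The plan is to assemble the structural facts recorded above --- the distance-nonincreasing property of minimizing movements, the minimizing-sequence property from \cite{Mayer}, and the theory of asymptotic centers and weak convergence in complete NPC spaces --- around the set $C = \{ y \in X : f(y) = \inf_X f \}$ of minimizers of $f$. Fix a sequence $t_n \to \infty$ and write $x_n = F_{t_n} x$ for the $f$-minimizing movement started at $x$; then $C$ is nonempty by hypothesis, closed because $f$ is lower semicontinuous, and convex because $f$ is convex. The first observation I would make is that every $y \in C$ is stationary for the minimizing movement, $F_s y = y$ for all $s \geq 0$, since a global minimum is fixed by each resolvent operator; combined with the distance-nonincreasing property this shows that $s \mapsto d(F_s x, y)$ is nonincreasing for each $y \in C$, so that $\{x_n\}$ is bounded and $\lim_n d(x_n, y)$ exists for every $y \in C$. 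In particular Lemma \ref{uniqueac} applies to $\{x_n\}$ and to all of its subsequences.

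Next I would show that every weak cluster point of $\{x_n\}$ lies in $C$. For this, recall from \cite{Mayer} Theorem 2.39 that the minimizing movement is a minimizing sequence, $f(x_n) \to \inf_X f$. By Lemma \ref{weaklsclemma} the function $f$, being lower semicontinuous and convex, is weakly lower semicontinuous; hence if a subsequence $\{x_{n_k}\}$ converges weakly to $z$, then $f(z) \leq \liminf_k f(x_{n_k}) = \inf_X f$, so $z \in C$.

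The remaining point is uniqueness of the weak cluster point, from which weak convergence of $\{x_n\}$ follows by the standard facts about weak convergence in complete NPC spaces: bounded sequences possess weak cluster points, and a bounded sequence with a single weak cluster point converges weakly to it. Suppose $c_1, c_2 \in C$ are weak cluster points, realized by subsequences $\{x_{n_k}\}$ converging weakly to $c_1$ and $\{x_{m_l}\}$ converging weakly to $c_2$, and assume without loss of generality that $r(\{x_{n_k}\}) \leq r(\{x_{m_l}\})$. Given $\ge > 0$, choose $K$ with $d(x_{n_k}, c_1) < r(\{x_{n_k}\}) + \ge$ for all $k \geq K$; then the monotonicity of $s \mapsto d(F_s x, c_1)$ gives, for all sufficiently large $l$, that $d(x_{m_l}, c_1) \leq d(x_{n_K}, c_1) < r(\{x_{n_k}\}) + \ge \leq r(\{x_{m_l}\}) + \ge$. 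Letting $\ge \to 0$ shows $r(\{x_{m_l}\}, c_1) \leq r(\{x_{m_l}\})$, so $c_1$ is an asymptotic center of $\{x_{m_l}\}$; since $c_2$ is one too, Lemma \ref{uniqueac} forces $c_1 = c_2$. Applying this to an arbitrary subsequence of $\{x_n\}$ --- each has at least one weak cluster point, which lies in $C$ and equals this common value --- gives that $\{x_n\}$ converges weakly to a minimizer of $f$, and interleaving two choices of $\{t_n\}$ shows the weak limit is independent of the choice of sequence.

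I expect the uniqueness step to be the main obstacle: it is where one plays the monotonicity of $s \mapsto d(F_s x, y)$ for $y \in C$ off against the uniqueness of asymptotic centers, and some care is needed about how the indices $m_l$, $n_K$ of the two subsequences and the underlying times $t_n$ line up. The other ingredients --- weak lower semicontinuity of $f$, the minimizing-sequence property, and the existence and uniqueness of asymptotic centers --- are quoted from the results above, while the verification that $C$ is closed, convex, and invariant under the flow is routine.
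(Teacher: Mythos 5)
Your proof is correct and follows essentially the same route as the paper: stationarity of minimizers plus the distance-nonincreasing property to get monotonicity of $d(x_n, y)$ for $y \in C$, Mayer's Theorem 2.39 and weak lower semicontinuity (Lemma \ref{weaklsclemma}) to place all weak cluster points in $C$, and the asymptotic-radius comparison with Lemma \ref{uniqueac} to show the weak cluster point in $C$ is unique. You are somewhat more explicit than the paper about the final bookkeeping (existence of weak cluster points for bounded sequences, passing from a unique weak cluster point to weak convergence, and independence of the limit from the choice of $\{t_n\}$), but these are cosmetic elaborations of the same argument rather than a different approach.
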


\begin{proof}[Proof of Theorem \ref{convthm}] In \cite{SMM} Lemma 5.9 it was
established that $(\bar{\HH}, d)$ is an NPC space, and in \cite{SMM} \S 5 we
established that $\bar{\nu}$ is a lower semicontinuous convex function.  The
theorem follows directly from Theorem \ref{Bacakthm}.
\end{proof}

\begin{proof}[Proof of Corollary \ref{convcor}] It follows from \cite{ChenTian}
Theorem 1.1.2 that $\phi_{\infty}$
is a minimizer for $\nu$, as defined on $\HH$.  From the definition of
$\bar{\nu}$ it follows immediately that $\phi_{\infty}$ is a minimizer for
$\bar{\nu}$.  The corollary follows from Theorem
\ref{convthm}.
\end{proof}

\bibliographystyle{hamsplain}

\end{document}